\definecolor{shadecolor}{gray}{0.875}
\definecolor{dblue}{rgb}{0,0,.6}
\newcommand{\mathds}[1]{{\mathbb #1}}
\numberwithin{equation}{section}
\begin{document}
%
%
%
\theoremstyle{definition}
\newtheorem{Definition}{Definition}[section]
\newtheorem*{Definitionx}{Definition}
\newtheorem{Convention}{Definition}[section]
\newtheorem{Construction}{Construction}[section]
\newtheorem{Example}[Definition]{Example}
\newtheorem{Examples}[Definition]{Examples}
\newtheorem{Remark}[Definition]{Remark}
\newtheorem*{Remarkx}{Remark}
\newtheorem{Remarks}[Definition]{Remarks}
\newtheorem{Caution}[Definition]{Caution}
\newtheorem{Conjecture}[Definition]{Conjecture}
\newtheorem*{Conjecturex}{Conjecture}
\newtheorem{Question}[Definition]{Question}
\newtheorem{Hypothesis}[Definition]{Hypothesis}
\newtheorem*{Questionx}{Question}
\newtheorem*{Acknowledgements}{Acknowledgements}
\newtheorem*{Notation}{Notation}
\newtheorem*{Organization}{Organization}
\newtheorem*{Disclaimer}{Disclaimer}
\theoremstyle{plain}
\newtheorem{Theorem}[Definition]{Theorem}
\newtheorem*{Theoremx}{Theorem}

\newtheorem*{thmA}{Theorem A}
\newtheorem*{thmB}{Theorem B}

\newtheorem{Proposition}[Definition]{Proposition}
\newtheorem*{Propositionx}{Proposition}
\newtheorem{Lemma}[Definition]{Lemma}
\newtheorem{Corollary}[Definition]{Corollary}
\newtheorem*{Corollaryx}{Corollary}
\newtheorem{Fact}[Definition]{Fact}
\newtheorem{Facts}[Definition]{Facts}
\newtheoremstyle{voiditstyle}{3pt}{3pt}{\itshape}{\parindent}%
{\bfseries}{.}{ }{\thmnote{#3}}%
\theoremstyle{voiditstyle}
\newtheorem*{VoidItalic}{}
\newtheoremstyle{voidromstyle}{3pt}{3pt}{\rm}{\parindent}%
{\bfseries}{.}{ }{\thmnote{#3}}%
\theoremstyle{voidromstyle}
\newtheorem*{VoidRoman}{}

\newenvironment{specialproof}[1][\proofname]{\noindent\textit{#1.} }{\qed\medskip}
\newcommand{\blowup}{\rule[-3mm]{0mm}{0mm}}
\newcommand{\cal}{\mathcal}
\newcommand{\Aff}{{\mathds{A}}}
\newcommand{\BB}{{\mathds{B}}}
\newcommand{\CC}{{\mathds{C}}}
\newcommand{\EE}{{\mathds{E}}}
\newcommand{\FF}{{\mathds{F}}}
\newcommand{\GG}{{\mathds{G}}}
\newcommand{\HH}{{\mathds{H}}}
\newcommand{\NN}{{\mathds{N}}}
\newcommand{\ZZ}{{\mathds{Z}}}
\newcommand{\PP}{{\mathds{P}}}
\newcommand{\QQ}{{\mathds{Q}}}
\newcommand{\RR}{{\mathds{R}}}
\newcommand{\Liea}{{\mathfrak a}}
\newcommand{\Lieb}{{\mathfrak b}}
\newcommand{\Lieg}{{\mathfrak g}}
\newcommand{\Liem}{{\mathfrak m}}
\newcommand{\ideala}{{\mathfrak a}}
\newcommand{\idealb}{{\mathfrak b}}
\newcommand{\idealg}{{\mathfrak g}}
\newcommand{\idealm}{{\mathfrak m}}
\newcommand{\idealp}{{\mathfrak p}}
\newcommand{\idealq}{{\mathfrak q}}
\newcommand{\idealI}{{\cal I}}
\newcommand{\lin}{\sim}
\newcommand{\num}{\equiv}
\newcommand{\dual}{\ast}
\newcommand{\iso}{\cong}
\newcommand{\homeo}{\approx}
\newcommand{\mm}{{\mathfrak m}}
\newcommand{\pp}{{\mathfrak p}}
\newcommand{\qq}{{\mathfrak q}}
\newcommand{\rr}{{\mathfrak r}}
\newcommand{\pP}{{\mathfrak P}}
\newcommand{\qQ}{{\mathfrak Q}}
\newcommand{\rR}{{\mathfrak R}}
\newcommand{\OO}{{\cal O}}
\newcommand{\numero}{{n$^{\rm o}\:$}}
\newcommand{\mf}[1]{\mathfrak{#1}}
\newcommand{\mc}[1]{\mathcal{#1}}
\newcommand{\into}{{\hookrightarrow}}
\newcommand{\onto}{{\twoheadrightarrow}}
\newcommand{\Spec}{{\rm Spec}\:}
\newcommand{\BigSpec}{{\rm\bf Spec}\:}
\newcommand{\Spf}{{\rm Spf}\:}
\newcommand{\Proj}{{\rm Proj}\:}
\newcommand{\Pic}{{\rm Pic }}
\newcommand{\Mov}{{\rm Mov }}
\newcommand{\Nef}{{\rm Nef }}
\newcommand{\MW}{{\rm MW }}
\newcommand{\Br}{{\rm Br}}
\newcommand{\NS}{{\rm NS}}
\newcommand{\Amp}{{\rm Amp}}
\newcommand{\Sym}{\operatorname{Sym}}
\newcommand{\Aut}{{\rm Aut}}
\newcommand{\Autp}{{\rm Aut}^p}
\newcommand{\ord}{{\rm ord}}
\newcommand{\coker}{{\rm coker}\,}
\newcommand{\divisor}{{\rm div}}
\newcommand{\Def}{{\rm Def}}
\newcommand{\rank}{\mathop{\mathrm{rank}}\nolimits}
\newcommand{\Ext}{\mathop{\mathrm{Ext}}\nolimits}
\newcommand{\EXT}{\mathop{\mathscr{E}{\kern -2pt {xt}}}\nolimits}
\newcommand{\Hom}{\mathop{\mathrm{Hom}}\nolimits}
\newcommand{\HOM}{\mathop{\mathscr{H}{\kern -3pt {om}}}\nolimits}
\newcommand{\chari}{\mathop{\mathrm{char}}\nolimits}
\newcommand{\ch}{\mathop{\mathrm{ch}}\nolimits}
\newcommand{\CH}{\mathop{\mathrm{CH}}\nolimits}
\newcommand{\supp}{\mathop{\mathrm{supp}}\nolimits}
\newcommand{\codim}{\mathop{\mathrm{codim}}\nolimits}
\newcommand{\calA}{\mathscr{A}}
\newcommand{\calH}{\mathscr{H}}
\newcommand{\calL}{\mathscr{L}}
\newcommand{\calM}{\mathscr{M}}
\newcommand{\bcalM}{\overline{\mathscr{M}}}
\newcommand{\calN}{\mathscr{N}}
\newcommand{\calX}{\mathscr{X}}
\newcommand{\calK}{\mathscr{K}}
\newcommand{\calD}{\mathscr{D}}
\newcommand{\calY}{\mathscr{Y}}
\newcommand{\calC}{\mathscr{C}}
\newcommand{\piet}{{\pi_1^{\rm \acute{e}t}}}
\newcommand{\Het}[1]{{H_{\rm \acute{e}t}^{{#1}}}}
\newcommand{\Hfl}[1]{{H_{\rm fl}^{{#1}}}}
\newcommand{\Hcris}[1]{{H_{\rm cris}^{{#1}}}}
\newcommand{\HdR}[1]{{H_{\rm dR}^{{#1}}}}
\newcommand{\hdR}[1]{{h_{\rm dR}^{{#1}}}}
\newcommand{\loc}{{\rm loc}}
\newcommand{\et}{{\rm \acute{e}t}}
\newcommand{\defin}[1]{{\bf #1}}
\newcommand{\blue}{\textcolor{blue}}
\newcommand{\red}{\textcolor{red}}
\newcommand{\magenta}{\textcolor{magenta}}

\renewcommand{\HH}{{\rm{H}}}

\title{Curves of maximal moduli on K3 surfaces}

\author{Xi Chen}
\address{632 Central Academic Building, University of Alberta, Edmonton, Alberta T6G 2G1, Canada}
\email{xichen@math.ualberta.ca}

\author{Frank Gounelas}
\address{Georg-August-Universit\"at G\"ottingen, Fakult\"at f\"ur Mathematik und Informatik, Bunsenstr. 3-5, 37073 G\"ottingen, Germany}
\email{gounelas@mathematik.uni-goettingen.de}

\date{\today}
\subjclass[2010]{14J28, 14N35, 14G17}

\begin{abstract}
We prove that if $X$ is a complex projective K3 surface and $g>0$, then there exist infinitely many families of
curves of geometric genus $g$ on $X$ with maximal, i.e., $g$-dimensional, variation in moduli. In particular every K3
surface contains a curve of geometric genus 1 which moves in a non-isotrivial family. This implies a conjecture of
Huybrechts on constant cycle curves and gives an algebro-geometric proof of a theorem of Kobayashi that a
K3 surface has no global symmetric differential forms.
\end{abstract}

\maketitle

\setcounter{tocdepth}{1}
\tableofcontents

\section{Introduction}

Building on the work of many people \cite{morimukai,chen,btdensity,bht,liliedtke}, it was recently proved in
\cite{regenerationinfinite} that for any integer $g\geq0$ and any complex projective K3 surface $X$, there is an
infinite sequence of
integral curves $C_n\subset X$ of geometric genus $g\ge 0$ such that for any ample divisor $H$ \[\lim_{n\to\infty} HC_n = \infty.\]
The aim of this paper is to strengthen and give a new proof of this result for curves of genus $g>0$, assuming
only the case $g=0$, and then to derive a number of applications to the geometry of K3 surfaces. In particular we prove
the following.

\begin{thmA}\label{thmA}
Let $X$ be a K3 surface over an algebraically closed field of characteristic zero and $g>0$ an integer. There
exists a sequence of integral curves $C_n\subset X$ of geometric genus $g$, such that 
\[\lim_{n\to\infty} C_n^2 = \infty\] 
and the normalisation morphism of each $C_n$ deforms in a family of smooth genus $g$ curves on $X$ of maximal moduli. 
\end{thmA}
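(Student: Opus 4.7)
My plan is to construct the $C_n$ by partially smoothing $g$ nodes of a nodal rational curve of very high degree furnished by the $g=0$ case, and then to check maximality of moduli by a local analysis of the moduli map at a deepest boundary point of $\bcalM_g$.

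\medskip

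The starting point is \cite{regenerationinfinite} applied with $g=0$, which yields integral rational curves $R_n\subset X$ with $R_n^2\to\infty$; by adjunction each $R_n$ has arithmetic genus $\delta_n=R_n^2/2+1$. What I actually need is a nodal representative $R_n$ with $\delta_n\ge g$ ordinary nodes, at least $g$ of which can be smoothed independently on $X$. Granting this, I pick nodes $p_1,\dots,p_g$ of $R_n$ and invoke standard unobstructedness of node-smoothings on a K3 surface to obtain a smooth $g$-dimensional family
\[
\pi\colon\mathcal{C}\longrightarrow T,\qquad \mathcal{C}\subset X\times T,
\]
with $\mathcal{C}_0=R_n$, whose general fiber $C_{n,t}:=\mathcal{C}_t$ is integral (being a small deformation of the irreducible $R_n$), has exactly $\delta_n-g$ nodes, and has smooth genus-$g$ normalization $\widetilde{C}_{n,t}$; in particular $C_{n,t}^2=R_n^2\to\infty$. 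Normalizing $\pi$ along the $\delta_n-g$ persistent nodes produces a flat family $\widetilde{\mathcal{C}}\to T$ of arithmetic-genus-$g$ curves whose central fiber is the irreducible nodal rational curve obtained from $\mathbb{P}^1$ by identifying pairs of preimages of $p_1,\dots,p_g$.

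\medskip

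To verify that the induced moduli map $\mu\colon T\to\bcalM_g$ has $g$-dimensional image, I analyze it at $\mu(0)$, which lies in the deepest stratum of the boundary divisor $\Delta_0\subset\bcalM_g$ consisting of irreducible $g$-nodal rational curves. Formally locally at this point, $\bcalM_g$ admits coordinates $(q_1,\dots,q_g,\mathbf{m})$ in which $q_i$ smooths the $i$-th node. The local $A_1$-model $xy=q_i$ on the smooth surface $X$ forces $q_i(t)=c_it_i+O(t^2)$ with $c_i\ne 0$, while $q_j(t),\mathbf{m}(t)\in O(t^2)$ for $j\ne i$; hence $d\mu|_0$ is injective from the $g$-dimensional tangent space $T_0T$ into the $g$-dimensional subspace of boundary smoothing directions. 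Therefore $\mu$ is generically finite onto a $g$-dimensional image, which is the maximal variation of moduli.

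\medskip

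The hard step is the first one: the result of \cite{regenerationinfinite} produces integral rational curves of unbounded self-intersection but does not a priori guarantee nodality, nor that $g$ nodes can be simultaneously smoothed independently in $X$. Securing this — presumably by revisiting the Kulikov-type degeneration underlying the regeneration and showing that nodality propagates to $R_n$, or by working with more general singularity types and showing that $g$ independent smoothing branches still emanate from $R_n$ in the Severi variety — is where I expect the specific geometry of K3 surfaces to enter substantively and constitutes the technical crux of the approach.
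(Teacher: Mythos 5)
Your proposal has two genuine gaps, and the first one you name yourself is exactly where the paper's main technical content lives. The result of \cite{regenerationinfinite} for $g=0$ gives integral rational curves that are not known to be nodal --- indeed, the existence of infinitely many \emph{nodal} rational curves on an arbitrary K3 surface is open --- so a plan predicated on finding a nodal $R_n$ with $g$ independently smoothable nodes cannot be completed as stated. The paper avoids this entirely: via the logarithmic Bogomolov--Miyaoka--Yau inequality and an Orevkov--Zaidenberg-type local analysis (Proposition \ref{CTBK3PROPCUSPRC}), any integral curve of geometric genus $g$ with $D^2>4690+550g+16g^2$ must have at least one \emph{locally reducible} singularity (two branches at a point --- far weaker than a node), and Lemma \ref{lem:red sing} converts one such singularity into a single node on a partial normalisation. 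The genus is then raised \emph{one at a time} by Proposition \ref{prop:maxmod1}, whose maximal-moduli statement is proved not by your local coordinate computation at a deep boundary stratum of $\bcalM_g$ but by a dimension count on the Kontsevich space: the boundary $\partial\bcalM_{g+1}$ is $\QQ$-Cartier, the genus-$g$ locus already has $g$-dimensional image in moduli, and the component containing the partial normalisation has dimension exactly $g+1$ and is not contained in the boundary preimage. This inductive one-node-per-step structure is what makes the weak singularity information sufficient, whereas your plan needs $g$ nodes at once.

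The second gap is your starting point: \cite[Theorem A]{regenerationinfinite} gives rational curves with $HR_n\to\infty$, which does \emph{not} imply $R_n^2\to\infty$ (the Hodge index inequality $R_n^2H^2\le (HR_n)^2$ goes the wrong way, and on a K3 with infinite automorphism group one can have infinitely many $(-2)$-curves). The paper therefore splits into two cases: if $C_n^2$ is unbounded it proceeds as above, and if $C_n^2\le c$ for all $n$ it shows $\varlimsup C_mC_n=\infty$ by a Picard-rank argument and invokes Proposition \ref{CTBK3PROPINTSC} (again via log BMY) to find two rational curves meeting in at least two distinct points, whose union is then smoothed to genus $1$ by Proposition \ref{prop:maxmod2}. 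Your proposal has no mechanism for this bounded-self-intersection case. In short, the architecture (deform rational curves at locally reducible points, track moduli through the boundary) is in the right spirit, but both the singularity input and the unboundedness input you assume are precisely the things that must be, and in the paper are, proved.
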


More precisely, for each such $C_n\subset X$ there exists a diagram 
\[
\xymatrix{
    \mathcal{C}_n\ar[d]^{f_n}\ar[r]^{F_n} & X \\
    T_n\ar[r]^{\phi_n} & \calM_g 
}
\]
where $f_n$ is a smooth family of curves over an irreducible variety $T_n$ so that there exists a point $t\in T_n$ so
that $F_{n,t}:\mathcal{C}_{n,t}\to X$ is the normalisation morphism of $C_n$ composed with the inclusion, and $\dim T_n
= \dim \phi(T_n) = g$ where $\phi_n$ is the moduli map.

We give first an idea of the proof of this theorem. As mentioned, its proof relies on the existence of
infinitely many rational curves on a K3 surface and not on the full statement of \cite[Theorem
A]{regenerationinfinite}, so provides a new proof and a strengthening of the higher genus case of loc.\ cit.,
both in that $C_n^2 \to\infty$ implies $HC_n \to \infty$ by the Hodge Index Theorem, but also that the curves produced
vary in moduli. 

The second key ingredient in proving the above theorem is the logarithmic Bogomolov--Miyaoka--Yau inequality, which
allows us, using local analysis of Orevkov--Zaidenberg which we expand on in Section \ref{sec:maxmod}, to control the
singularities of rational curves in $X$ as their self-intersection increases. In particular, we show first in
Proposition \ref{CTBK3PROPCUSPRC} that if \[C^2>4690\] for $C$ a rational curve on a K3 surface, then $C$ must have a
locally reducible singularity (i.e., one with at least two branches). As it is not known whether such a rational curve
always exists on a K3 surface, we also show in Proposition \ref{CTBK3PROPINTSC} that if $C_1, C_2$ are two rational
curves so that $C_1C_2$ is large enough with respect to $C_1^2, C_2^2$, then they  must meet in at least two distinct
points (e.g., if $C^2\leq4690$ for all rational curves in the K3, then $C_1C_2> 1299546$ suffices). As a consequence, a
partial normalisation of such a $C$ or of such a union $C_1\cup C_2$ may now be deformed in $\bcalM_1(X,\beta)$ to
produce a genus one curve which necessarily deforms with maximal moduli. The argument then proceeds by induction on the
genus.

By results of Mukai, the general curve of genus $g$ is contained in a K3 surface if and only if $2\leq g\leq9$ or
$g=11$. Our result above however says that for any fixed K3 surface $X$ and any $g\geq0$, there exist $g$-dimensional
subvarieties of $\calM_g$ whose general member parametrises a curve which admits a morphism to $X$ birational onto its
image. In the opposite direction, it is worth noting that it is expected yet not known that a very general K3 surface
cannot be dominated by the product of two curves, which would imply that curves of constant moduli should not exist on
most K3 surfaces.

As far as applications are concerned, although the existence of rational curves is satisfying to know, they do not
provide much to work with. It turns out that the existence of one single genus 1 curve produced by Theorem \ref{thmA}
has numerous applications, so we begin by stating it as a separate corollary.

\begin{Corollaryx}
A K3 surface in characteristic zero contains a non-isotrivial family of integral curves of geometric genus 1.
\end{Corollaryx}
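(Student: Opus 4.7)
The plan is to obtain this corollary as an immediate consequence of Theorem A specialised to $g=1$. That theorem produces an integral curve $C\subset X$ (in fact infinitely many) of geometric genus one whose normalisation $\widetilde{C}\to C\hookrightarrow X$ deforms in an algebraic family of smooth genus one curves on $X$ whose induced moduli map to $\bcalM_1$ has image of dimension one.

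Now $\bcalM_1$ is itself one-dimensional, parameterising elliptic curves by their $j$-invariant, so a one-dimensional image inside it is automatically dominant. In particular the $j$-invariant of the generic smooth genus one curve in the family is non-constant, which is by definition non-isotriviality. Taking the images of this family of smooth genus one curves inside $X$ then produces a non-isotrivial family of integral curves of geometric genus $1$ on $X$, as required. I expect no substantive obstacle here: the content of the corollary is simply the observation that in genus one the notions of \emph{maximal moduli} and \emph{non-isotrivial} coincide, since $\dim \bcalM_1 = 1$. The only minor bookkeeping point is that non-isotriviality of the family of normalisations implies non-isotriviality of the family of images in $X$, which is automatic because the normalisation determines the smooth genus one curve up to isomorphism, and hence determines its $j$-invariant.
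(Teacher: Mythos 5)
Your proposal is correct and matches the paper exactly: the corollary is stated as an immediate specialisation of Theorem A to $g=1$, where maximal moduli (image of dimension $\ge 1$ in $\bcalM_1$) is precisely non-isotriviality since $\dim\bcalM_1=1$. The paper itself uses these two notions interchangeably in the $g=1$ step of the proof of Theorem A, so there is nothing further to add.
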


It is well-known that any K3 surface contains a family of genus 1 curves, so what is new in the above is the
variation in moduli. As an application, combined with a result of Voisin \cite[Theorem
11.1]{huybrechtsconstant} (where the existence of curves produced by the corollary is implicitly asked), the above
immediately implies a conjecture of Huybrechts \cite[Conjecture 2.3]{huybrechtsconstant}. 

\begin{Corollaryx}
There are infinitely many constant cycle curves of bounded order on every complex K3 surface $X$ and
their union is dense in the strong topology.
\end{Corollaryx}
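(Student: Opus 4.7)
The proof is a formal consequence of Voisin's Theorem 11.1 of \cite{huybrechtsconstant} together with the preceding Corollary. Voisin's theorem asserts that any complex K3 surface $X$ admitting a non-isotrivial one-parameter family of smooth curves of geometric genus $1$ satisfies Huybrechts' Conjecture 2.3 of loc.\ cit., which is exactly the statement of this corollary. Since the preceding Corollary supplies precisely such a family on every complex K3 surface, the result follows at once.

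The one point worth checking carefully is the matching of hypotheses. The preceding Corollary produces a non-isotrivial family of integral genus $1$ curves, whereas Voisin works with families of smooth curves; but these are compatible because Theorem A produces the integral curves as images in $X$ of a family of smooth genus $1$ curves (namely the normalisations) whose moduli map to $\bcalM_1$ has $1$-dimensional image. In particular the $j$-invariant of the family is non-constant, which is the non-isotriviality Voisin requires. If one wanted to strengthen the statement further, one could instead feed in Theorem A with $g=1$ to obtain not one but infinitely many such families, with self-intersections going to infinity; this would give an alternative route to the density assertion through a direct covering argument.

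The main, essentially only, obstacle I anticipate is verifying that Voisin's density and order conclusions really are ambient (for $X$ itself, rather than merely for the total space of the given family). This is the content of her theorem as formulated in \cite[Theorem 11.1]{huybrechtsconstant}: the uniform bound on the order comes from the intrinsic Beauville--Voisin structure of $\CH_0(X)$, and strong-topology density in $X$ follows by combining a density statement on the base of the family with the movability of the fibres inside $X$. I would verify these points directly by reading Voisin's proof, but given that Huybrechts formulates his conjecture precisely in the ambient form of the present corollary, this cross-reference should be routine.
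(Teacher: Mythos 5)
Your proposal is correct and follows exactly the paper's route: the corollary is derived immediately by combining the preceding Corollary (the non-isotrivial genus $1$ family from Theorem A) with Voisin's Theorem 11.1 in the appendix to \cite{huybrechtsconstant}, which shows that the existence of such a family implies Huybrechts's Conjecture 2.3. The hypothesis-matching points you raise are the right ones to check, and the paper treats them as routine in the same way.
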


In a different direction, even though $\HH^0(X,\Omega^1_X)=0$ is easy to see for a complex K3 surface $X$ via Hodge
theory, Kobayashi \cite[Corollary 8]{Kobayashi} also proved that a simply connected Calabi--Yau manifold has no
symmetric differentials, or in other words that \[\HH^0(X, \Sym^n\Omega^1_X)=0 \text{ for any }n>0.\] His proof is also
analytic in nature and relies on the resolution of the Calabi Conjecture by Yau. We give an algebraic proof of
this fact for K3 surfaces, using only the existence of one non-isotrivial family of genus 1 curves, which follows from
the Corollary above.

\begin{thmB}[Kobayashi]
The cotangent bundle of a complex K3 surface is not $\QQ$-effective. 
\end{thmB}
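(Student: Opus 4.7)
The plan is to argue by contradiction: assume $\HH^0(X, \Sym^n \Omega^1_X) \neq 0$ for some $n \geq 1$, with a non-zero section $s$, and derive a contradiction using the non-isotrivial family of genus $1$ curves given by the preceding Corollary. Denote this family by $\pi\colon \mathcal{C} \to B$, with evaluation map $f\colon \mathcal{C} \to X$; if $[C]$ is the class of the image of a generic fiber, then $C^2 > 0$ and $f$ is dominant.

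The first task is to control $s$ fiberwise. For generic $b$, $f\colon C_b \to X$ is a generic immersion of a smooth elliptic curve, and the conormal exact sequence
\[ 0 \to \mathcal{O}_{C_b} \to f^*\Omega^1_X|_{C_b} \to \Omega^1_{C_b} \to 0 \]
holds, with sub and quotient both trivial (by adjunction on $X$, using $K_X = 0$ and $g(C_b)=1$). Its extension class lies in $\Ext^1(\mathcal{O}_{C_b},\mathcal{O}_{C_b}) = \HH^1(\mathcal{O}_{C_b}) \cong \mathbb{C}$, and non-isotriviality forces the Kodaira--Spencer class of the family to be non-zero; since the latter factors through the connecting map $\HH^0(N_{C_b/X}) \to \HH^1(T_{C_b})$ of this exact sequence, the extension is non-split. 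Thus $f^*\Omega^1_X|_{C_b}$ is the Atiyah bundle $F_2$, and $\Sym^n F_2 \cong F_{n+1}$ has $h^0 = 1$, with its unique section lying in the sub $\mathcal{O}_{C_b} \subset F_{n+1}$ obtained as the $n$-th symmetric power of the Atiyah sub-line. In particular, $(f^*s)|_{C_b}$ lies in this sub.

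This constraint globalizes: $s$ lies in $\HH^0(X, L^n)$ for a saturated line sub-sheaf $L \subset \Omega^1_X$ whose stalk at a general $x$ is the conormal direction to the family member through $x$. (If $f$ had degree $\geq 2$ and two preimages of $x$ gave distinct conormal directions, the $n$-th powers of distinct linear forms in $\Omega^1_{X,x}$ being linearly independent in $\Sym^n \Omega^1_{X,x}$ would force $s(x) = 0$, so we may assume the directions agree.) By construction $L|_{C_b} \cong \mathcal{O}_{C_b}$ on each general fiber, hence $L \cdot C = 0$. Now let $D := \divisor(s) \geq 0$, an effective divisor in $|L^n|$ with $D \cdot C = 0$. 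For any $x \in \supp(D)$, the family fiber $C_b$ through $x$ must be a component of $D$, since otherwise $C_b \cap D$ would be a non-empty finite set forcing $C_b \cdot D \geq 1$, contradicting $C_b \cdot D = 0$. As $D$ has only finitely many components, $D = \sum k_i C_{b_i}$ is a finite sum of family members, so $D \cdot C = (\sum k_i) C^2$; combined with $D \cdot C = 0$ and $C^2 > 0$ this forces $D = 0$. So $s$ is nowhere vanishing, $L^n \cong \mathcal{O}_X$; since $\Pic(X)$ is torsion-free on a K3, $L \cong \mathcal{O}_X$, and the inclusion $\mathcal{O}_X \hookrightarrow \Omega^1_X$ contradicts $\HH^0(\Omega^1_X) = h^{1,0}(X) = 0$.

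The main obstacle is the first step: showing that non-isotriviality implies the conormal extension is non-split, and identifying the unique section of the higher Atiyah bundle $F_{n+1}$ as lying in its trivial sub-line. Once those are in hand, the globalization to $L$ and the final intersection-theoretic contradiction use only the K3 facts $\HH^0(\Omega^1_X) = 0$ and torsion-freeness of $\Pic(X)$, together with the positivity $C^2 > 0$ from Theorem A.
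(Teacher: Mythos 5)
Your strategy up to and including the extraction of the line subsheaf $L$ is essentially the paper's, repackaged on $X$ instead of on $Y=\PP(\Omega^1_X)$: the non-split conormal extension detected by Kodaira--Spencer, the identification of $\Sym^n$ of the Atiyah bundle with $F_{n+1}$ whose unique section lies in the distinguished trivial sub-line, and the resulting fact that $s$ is generically an $n$-th power of a conormal covector --- which is exactly how the paper pins down the unique ``horizontal'' component $D_1\in|\OO_Y(1)+\pi^*F_1|$ of a divisor $G\in|m\OO_Y(1)|$. Those steps are sound (in characteristic zero the immersivity of the general member is Proposition \ref{prop:ac}, and $L^n$ is indeed saturated in $\Sym^n\Omega^1_X$ since the relevant quotients have torsion only in codimension $2$, so $s\in\HH^0(L^n)$ as you claim).

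The gap is in the final intersection-theoretic step. From $D\cdot C=0$ you infer that the family member through any $x\in\supp(D)$ is a component of $D$, and hence that $D=\sum k_iC_{b_i}$. But the integral genus-$1$ members only cover a dense open subset $U\subseteq X$, and a component $\Gamma$ of $D$ may lie entirely in $X\setminus U$. What your computation actually shows --- since $C$ is nef, being integral with $C^2>0$ --- is that every component $\Gamma$ of $D$ satisfies $\Gamma\cdot C=0$, hence is \emph{not} a family member (as $C_b\cdot C=C^2>0$), and indeed that no integral member passes through any point of $\supp(D)$; by the Hodge index theorem such $\Gamma$ are $(-2)$-curves orthogonal to $C$. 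So you are left with the unexcluded possibility $L^n\cong\OO_X(D)$ for a nonzero effective $D$ supported on such curves: then $L$ is only $\QQ$-effective, not visibly effective (Riemann--Roch forces effectivity only when $L^2\ge -2$, whereas here one only knows $L^2<0$), and $\HH^0(\Omega^1_X\otimes L^{-1})\neq 0$ no longer obviously embeds into $\HH^0(\Omega^1_X)$, so the appeal to Theorem \ref{thm:no 1-forms} does not yet apply. This residual case is precisely what the paper's bookkeeping of the components $D_i\in|a_i\OO_Y(1)+\pi^*F_i|$ with $a_i=0$ (where $F_i$ is effective with $CF_i=0$), together with the relation $\sum b_iF_i=0$ in $\Pic(X)$, is designed to handle; your argument needs an analogous additional step to rule out $D\neq 0$.
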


Based on his generalised Zariski decomposition, Nakayama in \cite{Nakayama} proved that this implies that the divisor
$\OO_{\PP(\Omega^1_X)}(1)$ is not even pseudoeffective (see Theorem \ref{thm:nakayama} for a proof).

Even though we do not provide a proof of Kobayashi's Theorem or Theorem A in positive characteristic, we state as many
results as possible in that direction and in the final Section \ref{sec:stability} we prove a conditional vanishing of
global 1-forms (known by theorems of Rudakov--Shafarevich or Nygaard) and stability of the cotangent bundle
(which holds if $X$ is not uniruled but is known to fail otherwise).

\noindent \textbf{Notation.} Throughout this paper a \textit{K3 surface} will always be a smooth projective simply connected
surface with trivial canonical divisor over an algebraically closed field.

\begin{Acknowledgements}
The idea to use rational curves to prove Nakayama's Theorem had been suggested to the second author by Claire Voisin
during a talk on the subject and we would like to thank her for this insight. We would like to thank Adrian Langer for
pointing out and filling in some missing steps in the proof of Theorem \ref{thm:nakayama}. The first named author is
partially supported by the NSERC Discovery Grant 262265 whereas the second is supported by the ERC Consolidator Grant
681838 ``K3CRYSTAL''.
\end{Acknowledgements}

\section{Deformations and singular curves}

Let $A$ be an effective divisor on a complex K3 surface. We consider the moduli map 
\[\begin{tikzcd} V_{A,g}\ar{r} &\bcalM_g\end{tikzcd}\] 
where $\bcalM_g$ is the moduli space of stable curves of genus $g$ and $V_{A,g}$ is the Severi variety parametrising
integral curves in $|A|$ of geometric genus $g$. It is expected that this map is generically finite over its image for
``most'' divisors $A\in \Pic(X)$, and we call such variation in moduli \textit{maximal} (see Definition \ref{def:maxmod} for a
more rigorous definition).
The problem of existence of curves moving with maximal moduli has been studied by various authors for generic complex K3
surfaces (cf.\ \cite{FKPS2, KemenyCurves, CFGK2015}).

\begin{Definition}
    Let $\mathbf{k}$ be an algebraically closed field and $C$ an integral curve over $\mathbf{k}$. We say that a point
    $p\in C$ is a locally reducible singularity of $C$ if the formal completion $\widehat{\OO}_{C,p}$ of the stalk of $C$ at $p$ is
    not an integral domain. Equivalently $\nu^{-1}(p)$ consists of at least two distinct points under the
    normalisation $\nu: C^\nu\to C$ of $C$. Otherwise, we say that $C$ is locally irreducible at $p$. The number of
    local branches of $C$ at $p$ is the number of points in $\nu^{-1}(p)$.
\end{Definition}

The following is standard and is the main reason we are interested in such singularities.

\begin{Lemma}\label{lem:red sing}
    Let $p\in C$ be a locally reducible singularity of an integral curve. Then the normalisation $\nu:C^\nu\to C$
    factors through a curve $C'$ which has one node and is smooth otherwise.
\end{Lemma}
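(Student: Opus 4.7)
The plan is to construct $C'$ as a nodal gluing of $C^\nu$ along two points of the fibre $\nu^{-1}(p)$. Since $p$ is locally reducible, there exist at least two distinct points $p_1, p_2 \in \nu^{-1}(p)$. I would define $C'$ either as the pushout of schemes in which $p_1$ and $p_2$ are identified to a single point $\star$ while $C^\nu \to C'$ is an isomorphism elsewhere, or, more concretely, as the relative spectrum over $C$ of the subsheaf $\mathscr{B} \subseteq \nu_* \OO_{C^\nu}$ of $\OO_C$-algebras that agrees with $\nu_* \OO_{C^\nu}$ away from $p$ and whose stalk at $p$ is
\[
\mathscr{B}_p \;=\; \bigl\{\, f \in (\nu_* \OO_{C^\nu})_p : f(p_1) = f(p_2) \,\bigr\}.
\]
The second description makes the factorisation $\nu : C^\nu \to C' \to C$ tautological, since by construction $\OO_C \subseteq \mathscr{B} \subseteq \nu_* \OO_{C^\nu}$.

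Once $C'$ is in hand, the remaining verifications are routine. Away from $\star$ the map $C^\nu \to C'$ is a local isomorphism, so $C'$ inherits smoothness from $C^\nu$. At $\star$, a standard completed-local computation gives
\[
\widehat{\OO}_{C',\star} \;\cong\; \widehat{\OO}_{C^\nu,p_1} \times_{\mathbf{k}} \widehat{\OO}_{C^\nu,p_2} \;\cong\; \mathbf{k}[\![x,y]\!]/(xy),
\]
so $\star$ is an ordinary node. Integrality of $C'$ follows because $C^\nu$ is irreducible (as $C$ is integral) and $C^\nu \to C'$ is surjective, while reducedness at $\star$ is immediate from the displayed local ring.

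The one point requiring care is the existence of the pushout in the category of schemes; for the gluing of two closed points of a noetherian scheme this is a standard Ferrand-type construction, and in any case the sheaf-of-subalgebras description sidesteps it entirely. I therefore do not anticipate any genuine obstacle.
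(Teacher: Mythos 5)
Your construction is correct, but it takes a genuinely different route from the paper. The paper builds $C'$ extrinsically: it chooses a sufficiently ample line bundle $L$ on $C$, forms the linear subsystem
\[
V = \nu^* \HH^0(L) + \HH^0(\nu^* L \otimes \OO_{C^\nu}(-q_1-q_2)) \subset \HH^0(\nu^* L),
\]
which separates all points and tangent vectors except that it forces $s(q_1)=s(q_2)$, and takes $C'$ to be the image $G$ of $C^\nu$ under the associated map to $\PP V^*$; the factorisation of $\nu$ through $G$ comes from $V \supseteq \nu^*\HH^0(L)$. You instead perform the intrinsic Ferrand-type pinching, realising $C'$ as $\BigSpec$ of the subsheaf of $\OO_C$-algebras $\mathscr{B} \subseteq \nu_*\OO_{C^\nu}$ of functions with $f(p_1)=f(p_2)$ (equality meant in the common residue field $\mathbf{k}$), and then identify the completed local ring at the glued point as the fibre product $\mathbf{k}[\![x]\!] \times_{\mathbf{k}} \mathbf{k}[\![y]\!] \cong \mathbf{k}[\![x,y]\!]/(xy)$. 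Your version buys a canonical construction with no auxiliary choices and an explicit computation of the node, and the subalgebra description does indeed make both the factorisation and the existence question trivial (finiteness and coherence of $\mathscr{B}$ are clear since $\nu$ is finite, and integrality follows as you say because $\mathscr{B}_p$ sits inside the semilocal ring of $C^\nu$ along $\nu^{-1}(p)$, hence inside the function field). The paper's version buys concreteness --- $C'$ is produced directly as a projective curve with its singularities read off from the separation properties of $V$ --- at the cost of the ``for $L$ sufficiently ample'' bookkeeping. Both arguments are complete and work over any algebraically closed field.
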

\begin{proof}
Choose a sufficiently ample line bundle $L$ on $C$. For $q_1\neq q_2\in \nu^{-1}(p)$, consider the subspace
\[
V = \nu^* \HH^0(L) + \HH^0(\nu^* L \otimes \OO_{C^\nu}(-q_1-q_2)) \subset \HH^0(\nu^* L).
\]
Then $s_1(q_1) = s_2(q_2)$ for all $s_1,s_2\in V$. Let $f: C^\nu \to G \subset \PP V^*$ be the morphism given by the
linear series $V$. Clearly, $\nu$ factors through $f$. For $L$ sufficiently ample, $G$ has a node $q = f(q_1) = f(q_2)$
over $p$ as the only singularity.
\end{proof}

For $C\subset X$ a curve on a K3 surface, we denote by 
\[ \bcalM_g(X,\OO_X(C))\] 
the Kontsevich moduli space of stable maps of arithmetic genus $g$ to $X$ with image of class $\OO(C)$.
For $f:D\to X$ such a morphism, we denote by $[f]$ the induced point in moduli.

\begin{Definition}\label{def:maxmod}
Let $f:C\to X$ be a stable map of arithmetic genus $g$ to a K3 surface over an
algebraically closed field. We say that $f$ deforms
\begin{enumerate}
    \item \textit{in the expected dimension} if $\dim M=g$ for every irreducible component $[f]\in
    M\subset\bcalM_g(X,\OO(C))$ and
    \item \textit{with maximal moduli} if the induced moduli map $\phi_M:M\to\bcalM_g$ satisfies $\dim(
    \operatorname{im}\phi_M) \geq g$ for at least one irreducible component $[f]\in M\subset\bcalM_g(X,\OO(C))$.
\end{enumerate}
We say that an integral curve $C\subset X$ satisfies one of the above properties if its normalisation morphism
$\nu:C^\nu\to C\subset X$ composed with the embedding into $X$ does so.
\end{Definition}

\begin{Remark}
From \cite[Theorem 2.11]{regenerationinfinite}, for any $C\subset X$ integral with normalisation morphism contained in
some irreducible component $[\nu:C^\nu\to X]\in M\subset \bcalM_g(X,\OO(C))$, we have $\dim M\geq g$. Moreover, in
characteristic zero any such $C$ deforms in the expected dimension (from Proposition \ref{prop:ac}
below) but it is not necessarily the case that $C$ deforms with maximal moduli, as seen for example by the existence of
isotrivial elliptic fibrations. In positive characteristic the situation is more complicated, as on a uniruled K3 there
exist genus 0 curves which deform too much. Nodal rational curves on a K3 surface are always rigid though, and on a non-uniruled K3
surface every curve of geometric genus 1 deforms in the expected dimension (see \cite[Proposition
2.9]{regenerationinfinite}). We do not know any examples of curves that do not deform in the expected dimension on a
non-uniruled K3 surface.
\end{Remark}

The following is basically the Arbarello--Cornalba Lemma (see \cite[Lemma 1.4]{arbcorn} or \cite[\S XXI.9]{acgh} for a
more thorough reference) in the case of K3 surfaces.

\begin{Proposition}\label{prop:ac}
Let $X$ be a K3 surface over an algebraically closed field of characteristic zero, and $C\subset X$ be an integral curve of
geometric genus $g\geq1$. Then if $[\nu]\in M\subset\bcalM_g(X, \OO(C))$ is an irreducible component containing the
normalisation $\nu:C^\nu\to C$, we have
\begin{enumerate}
    \item A general element $[f:D\to X]\in M$ corresponds to an unramified morphism.
    \item $\dim M = g$.
    \item If $D'\subset X$ an integral curve and $[f:D\to X]\in M$ general, then the support of $f^*\OO_X(D')$ consists
    of $D'f(D)$ distinct points.
\end{enumerate}
\end{Proposition}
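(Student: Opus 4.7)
The plan is to specialize the Arbarello--Cornalba Lemma \cite[Lemma 1.4]{arbcorn} to K3 surfaces, exploiting $\omega_X = \OO_X$.

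First, for (2), the key input is a normal bundle calculation. For $f:D\to X$ unramified with $D$ smooth of genus $g$, the conormal sequence
\[
0 \to N^*_f \to f^*\Omega^1_X \to \Omega^1_D \to 0
\]
combined with $\det f^*\Omega^1_X = f^*\omega_X = \OO_D$ identifies $N_f \cong \omega_D$, so $h^0(D, N_f) = g$. Standard deformation theory of stable maps with smooth source then gives that the tangent space to $\bcalM_g(X,\OO(C))$ at $[f]$ is $H^0(D,N_f)$, yielding $\dim_{[f]}M \le g$. Combined with the lower bound $\dim M \ge g$ from \cite[Theorem 2.11]{regenerationinfinite} recalled in the Remark above, this establishes (2) once (1) is in hand.

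The real substance is (1), where the Arbarello--Cornalba argument enters. Suppose for contradiction that a general $[f]\in M$ has ramification divisor $R > 0$. Saturating $T_D \hookrightarrow f^*T_X$ gives a short exact sequence $0 \to T_D(R) \to f^*T_X \to N_f \to 0$ with $N_f$ a line bundle, and the determinant calculation (again using $\omega_X=\OO_X$) identifies $N_f = \omega_D(-R)$. The main technical point---and the principal obstacle---is to show that the honest first-order deformations of the stable map $[f]$ factor through $H^0(N_f) = H^0(\omega_D(-R))$, which has dimension at most $g - \deg R < g$, contradicting $\dim M \ge g$. The subtlety is in ruling out spurious contributions from the torsion part $\OO_R$ of $\coker(df)$ to the tangent space of $M$ at $[f]$; this is exactly the content of \cite[Lemma 1.4]{arbcorn}, and it relies on characteristic zero (for example, via the symplectic form on $T_X$ coming from the K3 structure).

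Finally, (3) follows from (1) by a standard genericity argument: since a general $f \in M$ is unramified, it is a local immersion, and $f^*\OO_X(D')$ is a divisor on $D$ of degree $D'\cdot f(D)$. Its failure to be reduced is equivalent to $f(D)$ being tangent to $D'$ somewhere, a closed condition on $M$. Using the $g$-dimensional space of deformations $H^0(\omega_D)$ provided by (2), any prescribed tangency can be broken by a generic first-order perturbation, so the tangential locus is a proper closed subset of $M$ and a general $f$ meets $D'$ transversally at $D'\cdot f(D)$ distinct points.
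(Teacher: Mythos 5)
Your proposal follows essentially the same route as the paper, whose proof simply cites the Arbarello--Cornalba Lemma (via Dedieu--Sernesi) for (1) and derives (2) and (3) from it much as you do. One small imprecision in your step for (1): $h^0(D,\omega_D(-R))$ is \emph{not} bounded by $g-\deg R$ in general --- Serre duality gives $h^0(\omega_D(-R)) = g-1-\deg R + h^0(\OO_D(R))$, which exceeds $g-\deg R$ whenever $R$ is special and moves (e.g.\ $R$ a $g^1_2$ on a hyperelliptic curve) --- but since $h^0(\OO_D(R))\le \deg R$ for $g\ge 1$ and $R>0$ one still gets $h^0(\omega_D(-R))\le g-1<g$, which is all your contradiction requires. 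The parenthetical attributing the characteristic-zero hypothesis to the symplectic form on $T_X$ is also off the mark (the Arbarello--Cornalba Lemma is a statement about maps to arbitrary smooth surfaces and its failure in positive characteristic is an inseparability phenomenon), but nothing in your argument depends on that remark.
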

\begin{proof}
    The first claim is an application of the usual Arbarello--Cornalba Lemma in the case of K3 surfaces (see, e.g.,
    \cite{dedieusernesi}), whereas the second and third follow essentially from the first (see \cite[\S
    2]{regenerationinfinite} and the proof of \cite[Lemma 6.3]{regenerationinfinite}).
\end{proof}

\begin{Remark}
    In positive characteristic, it is not the case that (1) in the above is true (e.g., in a quasi-elliptic fibration
    the general fibre has ramified normalisation as it is a cusp), but we expect it to be true in most cases (see
    Question \ref{q:ac}). It is however true that (1) implies (2) and (3).
\end{Remark}

We recall the following argument, essentially due to Bogomolov--Mumford, cf.\ \cite[\S 13.2.1]{huybrechts}.

\begin{Proposition}\label{prop:maxmod1}
Let $X$ be a K3 surface over an algebraically closed field and $C\subset X$ be an integral curve of
geometric genus $g$. Assume further that $C$
\begin{enumerate} 
    \item deforms in the expected dimension,
    \item deforms with maximal moduli, and
    \item has a locally reducible singularity at a point $p$.
\end{enumerate} Then $C$ deforms to an integral curve $D$ of geometric genus $g+1$ which deforms in the expected
dimension and with maximal moduli.
\end{Proposition}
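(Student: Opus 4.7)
The plan is to exploit the locally reducible singularity at $p$ by passing to the partial normalisation of $C$ (which has a single node), producing a stable map of arithmetic genus $g+1$, and then smoothing that node while controlling the moduli map via a boundary-divisor argument on $\bcalM_{g+1}$.

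First I would apply Lemma \ref{lem:red sing} to factor $\nu : C^\nu \to C$ through a partial normalisation $\beta : C' \to C$ with $C'$ having a single node and arithmetic genus $g+1$; composing with $C \into X$ yields a stable map $f := \iota \circ \beta : C' \to X$ of class $[C]$. Let $N \subset \bcalM_{g+1}(X, \OO(C))$ be an irreducible component containing $[f]$. Standard deformation theory for K3 surfaces gives $\dim N \geq g+1$, and since $[f]$ lies in the open locus where the source is irreducible, so does the general point of $N$. Within this locus, the substratum of maps whose source has $k \geq 1$ nodes has dimension at most $g+1-k$ (realised via $\bcalM_{g+1-k, 2k}(X, [C])$ with a codimension-$2k$ diagonal condition), strictly less than $\dim N$. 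Hence the general $[f' : D' \to X] \in N$ has smooth source $D'$ of genus $g+1$; birationality onto an integral image is open and holds at $[f]$, so $D := f'(D')$ is an integral curve of geometric genus $g+1$, and Proposition \ref{prop:ac} gives $\dim N = g+1$ and that $D$ deforms in the expected dimension.

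For maximal moduli I would consider the auxiliary moduli space
\[
Z := \bigl\{ (h, x_1, x_2) \in \bcalM_{g, 2}(X, [C]) : h(x_1) = h(x_2),\ x_1 \neq x_2 \bigr\},
\]
together with the gluing morphism $Z \to N$ landing in the sublocus $N_0 \subset N$ of maps with irreducible $1$-nodal source. The triple $(\nu, q_1, q_2)$ with $q_1, q_2 \in \nu^{-1}(p)$ lies in $Z$ and is sent to $[f]$. The condition $h(x_1) = h(x_2)$ has codimension $2$ in $\bcalM_{g, 2}(X, [C])$ (of dimension $g+2$), giving $\dim Z \geq g$; equality holds because the projection $Z \to M$ is generically finite at $[\nu]$ — its fibre there being the finite set of pairs of branches at the locally reducible singularities of $C$, which remain isolated under generic deformation in $M$ by the unramifiedness from Proposition \ref{prop:ac}(1). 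Since the moduli map $Z \to \bcalM_{g+1}$ factors through the finite gluing $\bcalM_{g, 2} \to \Delta_0 \subset \bcalM_{g+1}$ onto the nodal boundary divisor, and $M \to \bcalM_g$ has $g$-dimensional image by hypothesis (2), one concludes that $\phi_N(N_0) \subset \Delta_0$ has dimension $g$.

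The argument concludes via a divisorial dimension comparison: since the general point of $N$ has smooth source, $\overline{\phi_N(N)} \not\subset \Delta_0$, hence $\overline{\phi_N(N)} \cap \Delta_0$ is a proper closed subvariety of codimension one that contains the $g$-dimensional $\phi_N(N_0)$; this forces $\dim \overline{\phi_N(N)} \geq g+1$, so $D$ deforms with maximal moduli. The main obstacle I expect is the dimension bookkeeping for $Z$, in particular verifying generic finiteness of the projection $Z \to M$, which ultimately relies on Proposition \ref{prop:ac}(1) to ensure that branch coincidences remain isolated under generic deformation.
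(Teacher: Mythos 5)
Your overall strategy is the one the paper uses: pass to the partial normalisation with a single node via Lemma \ref{lem:red sing}, view it as a point of $\bcalM_{g+1}(X,\OO(C))$, and play the lower bound $\dim N\ge g+1$ against the fact that the boundary $\partial\bcalM_{g+1}$ is a $\QQ$-Cartier divisor. The maximal-moduli half of your argument is essentially the paper's, just with the boundary locus made explicit through the gluing space $Z$. However, there is a genuine gap in your first half, in the claim that the substratum of maps with $k\ge 1$ nodes has dimension \emph{at most} $g+1-k$. Imposing a codimension-$2k$ diagonal condition on $\bcalM_{g+1-k,2k}(X,[C])$ only bounds the dimension of each nonempty component from \emph{below} by $\dim\bcalM_{g+1-k,2k}(X,[C])-2k$; the upper bound you need is exactly the properness/generic-finiteness statement you flag at the end, and you only address it for $k=1$ and only ``at $[\nu]$'', i.e.\ for the one component of $Z$ dominating $M$. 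Worse, even the input $\dim\bcalM_{g+1-k,2k}(X,[C])=g+1+k$ presupposes the expected-dimension property for \emph{all} components of the genus-$(g+1-k)$ moduli of stable maps in class $[C]$ (including multiple covers, and including $k=g+1$ where Proposition \ref{prop:ac} does not apply). That is not among the hypotheses: the proposition is stated over an arbitrary algebraically closed field, hypotheses (1)--(3) concern only the curve $C$ itself, and the paper's remark after Definition \ref{def:maxmod} notes that in positive characteristic there are curves that deform too much. Your repeated appeals to Proposition \ref{prop:ac} silently restrict the argument to characteristic zero.

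The paper avoids all of this by never stratifying $N$ globally. It takes $D_M$ to be a component of $M\cap\phi^{-1}(\partial\bcalM_{g+1})$ \emph{through $[f]$}; by semicontinuity of the number of nodes and openness of irreducibility of the source, the general member of such a $D_M$ is a $1$-nodal deformation of the pair $(C,\,\text{node over }p)$, so its image lies on the same Severi component as $C$ and $\dim D_M\le g$ follows directly from hypothesis (1) alone. Then $D_M\subsetneq M$ forces $M\not\subset\phi^{-1}(\partial\bcalM_{g+1})$, the $\QQ$-Cartier property gives $\operatorname{codim}(D_M,M)=1$, and $g+1\le\dim M=\dim D_M+1\le g+1$ closes the count --- characteristic-free and with no control needed over other components of the lower-genus moduli spaces. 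If you rework your first half to argue only about the boundary components containing $[f]$, your proof becomes the paper's.
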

\begin{proof}
As the singularity at $p$ is locally reducible, from Lemma \ref{lem:red sing} we may take $f:\widetilde{C}\to X$ to be a
partial normalisation of $C$ which has one node over the point $p$ and is smooth otherwise. In particular
$[f]\in\bcalM_{g+1}(X, \OO(C))$.  Let $M$ be an irreducible component of $\bcalM_{g+1}(X,\OO(C))$ containing $[f]$.
From \cite[Theorem 2.11]{regenerationinfinite}, $\dim M\geq g+1$.  Consider now the moduli map
\[\begin{tikzcd} \phi: \bcalM_{g+1}(X,\OO(C))\ar{r}& \bcalM_{g+1}.\end{tikzcd}\]
Let $D_M$ be an irreducible component of $M\cap \phi^{-1}(\partial\bcalM_{g+1})$ containing $[f]$,
where $\partial\bcalM_{g+1} = \bcalM_{g+1} - \calM_{g+1}$ is the boundary divisor of $\bcalM_{g+1}$.

For a general point $[h]\in D_M$, $h: \Gamma\to X$ is a stable map such that $\Gamma$ is an integral curve of geometric
genus $g$ with a node and $h(\Gamma)$ and $C$ lie on the same component of $V_{C,g}$. Since $C$ deforms in the expected
dimension, $\dim D_M \le g$ and hence $D_M \subsetneq M$. On the other hand, since $\partial\bcalM_{g+1}$ is a
$\QQ$-Cartier divisor, $D_M$ has codimension one in $M$. We must have
\[
g + 1\le \dim M = \dim D_M + 1 \le g + 1
\]
and hence $\dim M = g+1$.
This proves that
for a general point $h:\Gamma \to X$ of $M$, $D = h(\Gamma)$ is an integral curve of geometric genus $g+1$ that deforms in the expected dimension.

Since $C$ deforms with maximal moduli, there exists an irreducible component $D$ of $
\phi^{-1}(\partial\bcalM_{g+1})$ containing $[f]$ such that $\dim \phi(D) = g$. Let $M'$ be an irreducible component of
$\bcalM_{g+1}(X,\OO(C))$ containing $D$. Since $\phi(M')$ is not contained in $\partial\bcalM_{g+1}$, we conclude
\[
g + 1 = \dim M' \ge \dim \phi(M') \ge \dim \phi(D) +1 = g+1
\]
and hence $\dim \phi(M') = g+1$. Therefore, for a general point $h:\Gamma \to X$ of $M'$, $D = h(\Gamma)$ is an integral
curve of geometric genus $g+1$ that deforms with maximal moduli.
\end{proof}

Although we will not be using it in this paper, we include the following immediate corollary, which is well-known to
experts, as an application.

\begin{Corollary}\label{cor:defofnodalrtlcurve}
Let $X$ be a K3 surface over an algebraically closed field and $R\subset X$ be a nodal rational
curve of arithmetic genus $g\geq1$. For any $1\leq d\leq g$, $R$ deforms to a nodal integral curve $C$ of geometric genus $d$
which deforms in the expected dimension and with maximal moduli.
\end{Corollary}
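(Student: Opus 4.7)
The plan is to prove the corollary by induction on $d$, each step being a direct application of Proposition \ref{prop:maxmod1}.

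For the base case $d=1$, I verify that $R$ satisfies the three hypotheses of Proposition \ref{prop:maxmod1} with $g=0$: nodal rational curves on a K3 surface are rigid (as noted in the remark after Definition \ref{def:maxmod}), so $R$ deforms in the expected dimension $0$; the maximal-moduli condition is vacuous in genus zero since $\bcalM_0$ is a point; and any node of $R$ is a locally reducible singularity. The proposition therefore yields an integral deformation $C_1$ of geometric genus $1$ which deforms in the expected dimension and with maximal moduli.

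For the inductive step, suppose that for some $1\le k<d$ we have a nodal integral curve $C_k\subset X$ of geometric genus $k$, deforming in the expected dimension and with maximal moduli and lying in the same divisor class as $R$. By adjunction $p_a(C_k)=R^2/2+1=g$, so $C_k$ has $\delta(C_k)=g-k\geq 1$ nodes, hence in particular a locally reducible singularity. Proposition \ref{prop:maxmod1} then produces an integral deformation $C_{k+1}$ of geometric genus $k+1$ with the two required deformation-theoretic properties.

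It remains to arrange that $C_{k+1}$ can be chosen nodal, which goes slightly beyond what Proposition \ref{prop:maxmod1} states on its own. Tracing through the construction, the partial normalization $\widetilde{C_k}$ supplied by Lemma \ref{lem:red sing} retains exactly one node (over a chosen node of $C_k$) and is smooth otherwise, while the induced map $f:\widetilde{C_k}\to X$ is, above each of the remaining $g-k-1$ nodes of $C_k$, the inclusion of two distinct smooth branches meeting transversally. A general deformation $h:\Gamma\to X$ in the irreducible component $M\subset\bcalM_{k+1}(X,\OO(C_k))$ constructed in the proof of Proposition \ref{prop:maxmod1} has smooth domain $\Gamma$ of genus $k+1$, and is unramified by Proposition \ref{prop:ac}(1); the single node of $\widetilde{C_k}$ smooths while each transversal crossing persists as a node of the image under small deformation. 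Thus $C_{k+1}=h(\Gamma)$ is nodal with exactly $g-k-1$ nodes, completing the induction. The main subtlety is precisely this last verification, which rules out tacnodal or worse singularities in the generic deformation: transversality of the preserved crossings together with the unramifiedness of Proposition \ref{prop:ac}(1) is what makes this work.
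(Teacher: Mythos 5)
Your overall strategy---induction on $d$ with Proposition \ref{prop:maxmod1} supplying each step, the base case handled by rigidity of nodal rational curves and the vacuity of the maximal-moduli condition in genus $0$, and the genus count $\delta(C_k)=p_a(C_k)-k=g-k$ supplying the locally reducible singularity---is exactly the paper's argument, and you correctly identify that the only point not already contained in Proposition \ref{prop:maxmod1} is the nodality of the general deformation, which is needed both to continue the induction and for the statement itself.

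There is, however, one genuine defect: the corollary is stated over an arbitrary algebraically closed field, but your justification of nodality invokes Proposition \ref{prop:ac}(1), which is stated only in characteristic zero; the remark following it explicitly warns that part (1) can fail in positive characteristic. So as written your proof only establishes the characteristic-zero case. The paper avoids this by reversing the logical dependency: it takes as the primitive fact that a general equigeneric deformation of a nodal curve is again nodal --- which follows in any characteristic from the versal deformation $xy=t$ of a node together with upper semicontinuity of the local $\delta$-invariant, with no appeal to generic unramifiedness --- and then observes that a nodal curve automatically has unramified normalisation, whence the expected-dimension property via \cite[Proposition 2.9]{regenerationinfinite}. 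Your transversality discussion is essentially this versal-deformation argument in disguise, so the fix is cosmetic: derive nodality of the nearby curves directly from the deformation theory of the node and the genus count $\delta(h(\Gamma))=g-k-1$, and then obtain unramifiedness as a consequence of nodality rather than the other way around.
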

\begin{proof}
The result follows by induction, Proposition \ref{prop:maxmod1} and the fact that a general deformation of a nodal curve
will be nodal and as such has unramified normalisation morphism, hence deforms in the expected dimension from
\cite[Proposition 2.9]{regenerationinfinite}.
\end{proof}

One similarly obtains the following.

\begin{Proposition}\label{prop:maxmod2}
Let $X$ be a K3 surface over an algebraically closed field and $C_1,C_2\subset X$ be two integral
curves of geometric genus $g_1, g_2$ respectively. Assume further that 
\begin{enumerate} 
    \item $C_i$ deforms in the expected dimension for $i=1,2$,
    \item $C_i$ deforms with maximal moduli for $i=1,2$,
    \item $|C_1\cap C_2|$ contains at least two distinct points.
\end{enumerate}
Then $C_1\cup C_2$ deforms to an integral curve $D$ of geometric genus $g_1+g_2+1$ which deforms in the expected
dimension and with maximal moduli.
\end{Proposition}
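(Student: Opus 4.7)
The plan is to mimic the proof of Proposition \ref{prop:maxmod1}, replacing the locally reducible singularity at a single point by the two intersection points of $C_1$ and $C_2$. Pick two distinct points $p_1, p_2 \in C_1\cap C_2$ with preimages $q_j, q_j' \in C_1^\nu, C_2^\nu$ over each $p_j$. Set $g = g_1 + g_2$, and form $\widetilde{C}$ from $C_1^\nu \sqcup C_2^\nu$ by identifying $q_j \sim q_j'$ for $j=1,2$: this is a reducible nodal curve of arithmetic genus $g+1$, and the natural map $f:\widetilde{C}\to X$ has image $C_1\cup C_2$ of class $\OO(C_1+C_2)$. Let $M$ be an irreducible component of $\bcalM_{g+1}(X,\OO(C_1+C_2))$ containing $[f]$, so $\dim M\geq g+1$ by \cite[Theorem 2.11]{regenerationinfinite}, let $\phi:\bcalM_{g+1}(X,\OO(C_1+C_2))\to\bcalM_{g+1}$ be the moduli map (so $\phi([f])\in\partial\bcalM_{g+1}$), and let $D_M$ be an irreducible component of $M\cap\phi^{-1}(\partial\bcalM_{g+1})$ containing $[f]$.

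The central step is the bound $\dim D_M\leq g$. A general $[h:\Gamma\to X]\in D_M$ should correspond to a flat smoothing of exactly one of the two nodes of $\widetilde{C}$: since each node connects the two components of $\widetilde{C}$, such a smoothing merges them, so $\Gamma$ is an integral nodal curve of geometric genus $g$, and $h(\Gamma)$ is an integral curve in $|C_1+C_2|$ of geometric genus $g$ realised as a one-node smoothing of some deformation $C_1'\cup C_2'$ where $C_i'$ is an integral curve of geometric genus $g_i$ deforming $C_i$. By the expected-dimension hypothesis on each $C_i$, the pair $(C_1',C_2')$ varies in a family of dimension $g_1+g_2=g$, and the remaining choice of which intersection point to smooth is finite; hence $\dim D_M\leq g$. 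Since $\partial\bcalM_{g+1}$ is $\QQ$-Cartier, $D_M$ has codimension at most one in $M$, yielding $\dim M=g+1$. A general $[h]\in M$ then lies off the boundary and corresponds to a smooth source of genus $g+1$ mapping birationally to an integral curve $D$ of geometric genus $g+1$ in $|C_1+C_2|$, which deforms in the expected dimension by Proposition \ref{prop:ac}.

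To upgrade to maximal moduli, I would choose the component $D_M$ so that its image in the boundary realises the maximal moduli of $C_1,C_2$: the stable class of $\Gamma$ lies in $\Delta_0\subset\partial\bcalM_{g+1}$ and is determined by its smooth genus-$g$ normalisation $\Gamma^\nu$ (a smoothing of $C_1^\nu$ attached to $C_2^\nu$ at one chosen intersection point) together with the two preimages of the remaining node. As $(C_1,C_2)$ varies in its $g$-dimensional family realising maximal moduli, $\Gamma^\nu$ varies in a $g$-dimensional family in $\bcalM_g$, so $\dim\phi(D_M)\geq g$. Taking $M'$ an irreducible component of $\bcalM_{g+1}(X,\OO(C_1+C_2))$ containing this $D_M$ and noting that $\phi(M')\not\subset\partial\bcalM_{g+1}$ while $\phi(D_M)\subsetneq\phi(M')$ are both irreducible and closed, we obtain $\dim\phi(M')\geq\dim\phi(D_M)+1\geq g+1$; combined with $\dim\phi(M')\leq\dim M'=g+1$ this forces $\dim\phi(M')=g+1$, so the general $D$ in $M'$ deforms with maximal moduli, as required.

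The main obstacle will be the analysis of a general $[h]\in D_M$. Because $[f]$ lies in a codimension-two boundary stratum of $\bcalM_{g+1}$ (corresponding to both nodes of $\widetilde{C}$), one must identify which divisorial boundary direction $D_M$ parametrises and verify rigorously that $h(\Gamma)$ arises as a one-node smoothing of a deformation $C_1'\cup C_2'$; this is precisely the step that converts the expected-dimension assumption on each $C_i$ individually into the key bound on $\dim D_M$ within the Severi variety of integral curves in $|C_1+C_2|$ of geometric genus $g$, which is not assumed to be of expected dimension directly.
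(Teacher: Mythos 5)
Your proposal is correct and follows essentially the intended route: the paper gives no separate proof of this proposition (it merely says ``one similarly obtains'' after Proposition~\ref{prop:maxmod1}), and your argument is precisely the analogue of that proof, with the two intersection points playing the role of the locally reducible singularity, the same $\dim D_M\le g$ versus $\QQ$-Cartier-boundary codimension-one dichotomy, and the same two-step upgrade to maximal moduli. The delicate point you flag at the end --- identifying the general member of $D_M$ and converting the expected-dimension hypotheses on $C_1,C_2$ individually into the bound $\dim D_M\le g$ --- is treated at exactly the same level of detail (i.e., asserted) in the paper's proof of Proposition~\ref{prop:maxmod1}, so your writeup is if anything more explicit than the source.
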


\section{Families of curves of maximal moduli}\label{sec:maxmod}

There are two main ingredients in the proof of Theorem A 
\begin{itemize}
\item the existence of infinitely many rational curves on every complex K3 surface \cite{regenerationinfinite},
\item the logarithmic Bogomolov--Miyaoka--Yau (BMY) inequality \cite{LOGMY}. 
\end{itemize}

Let us first review the basics of the latter. For the applications that we have in mind, we start with a reduced but
possibly reducible curve $D$ on a smooth projective surface $X$ over $\CC$. Take now a log resolution
\[\begin{tikzcd} (\widehat{X}, \widehat{D})\ar{r} & (X,D),\end{tikzcd}\] 
i.e., a birational projective morphism $f: \widehat{X}\to X$ such that the total transform $\widehat{D} = f^{-1}(D)=
\sum_{i=1}^n \Gamma_i$ of $D$ has simple normal crossings, with irreducible components $\Gamma_i$ and $X\backslash D
\cong \widehat{X}\backslash \widehat{D}$. We usually choose $(\widehat{X}, \widehat{D})$ to be the minimal resolution of
$(X,D)$.

Now, for such a pair $(\widehat{X}, \widehat{D})$ of a smooth projective surface and a SNC divisor, the log BMY
inequality says that if $K_{\widehat{X}} + \widehat{D}$ is $\QQ$-effective, then
\begin{equation}\label{CTBK3E003}
(K_{\widehat{X}} + \widehat{D})^2 \le 3 c_2(\Omega^1_{\widehat{X}}(\log \widehat{D})).
\end{equation}
We recall that $\Omega^1_{\widehat{X}}(\log \widehat{D})$ is the locally free sheaf which sits in the following short
exact sequence
\[
\begin{tikzcd}
0 \ar{r} & \displaystyle{\Omega^1_{\widehat{X}}}
\ar{r} & \displaystyle{\Omega^1_{\widehat{X}}(\log \widehat{D})} \ar{r} & \displaystyle{\bigoplus_{i=1}^n
\OO_{\Gamma_i}}
\ar{r} & 0
\end{tikzcd}
\]
and we refer for example to \cite[\S 2]{esnaultviehweg} for further details.
\begin{Remark}
    Note that there is a version of the log BMY inequality over fields of positive characteristic, proven recently by
    Langer \cite{LangerLogBMYCharP}. The conclusion is essentially the same inequality, however one requires that the
    pair $(\widehat{X},\widehat{D})$ lifts in a compatible way to $W_2(k)$.
\end{Remark}

Over the complex numbers we have
\begin{equation}\label{CTBK3E004}
c_2(\Omega^1_{\widehat{X}}(\log \widehat{D})) = 
e(\widehat{X} \backslash \widehat{D}) = 
e(X \backslash D) = e(X) - e(D),
\end{equation}
where $e(\bullet)$ is the topological Euler characteristic. 

For the applications we have in mind, $X$ will be a K3 surface and hence $K_{\widehat{X}} + \widehat{D}$ will always be effective. 

Although $c_2(\Omega^1_{\widehat{X}}(\log \widehat{D}))$ can be computed topologically by \eqref{CTBK3E004} over
$\CC$, we want to give a purely algebraic formula for it in terms of $c_2(X)$, $p_a(D)$ and the invariants of the
singularities of $D$ (we refer to \cite[\S 5]{djp} for the basics of curve singularities). As the proof of this works in arbitrary
characteristic we state it in this generality. 

\begin{Lemma}\label{CTBK3LEMC2LOGDIFF}
Let $X$ be a smooth projective surface over an algebraically closed field and $D$ be a reduced curve on $X$. Let $(\widehat{X}, \widehat{D})$ be the minimal log resolution of $(X,D)$. Then
\begin{equation}\label{CTBK3E001}
c_2(\Omega^1_{\widehat{X}}(\log \widehat{D}))
= c_2(X) + (K_X + D) D - \sum_{p\in D} (2\delta_p - \gamma_p + 1)
\end{equation}
where $\delta_p$ and $\gamma_p$ are the $\delta$-invariant and the number of local branches of $D$ at $p$, respectively.
\end{Lemma}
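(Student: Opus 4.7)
The plan is to compute $c_2(\Omega^1_{\widehat{X}}(\log\widehat{D}))$ directly on $\widehat{X}$ from the defining short exact sequence and then translate each of the resulting Chow-theoretic quantities back to $(X,D)$ using the explicit geometry of the log resolution $f$. This strategy is purely algebraic and so yields the statement in arbitrary characteristic, unlike the shortcut via \eqref{CTBK3E004}.

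First, multiplicativity of the total Chern class in the defining exact sequence, together with $c(\OO_{\Gamma_i}) = 1+\Gamma_i+\Gamma_i^2+\cdots$ (from $0\to \OO(-\Gamma_i)\to\OO_{\widehat{X}}\to\OO_{\Gamma_i}\to 0$), gives after extracting the degree two part
\[
c_2(\Omega^1_{\widehat{X}}(\log\widehat{D})) = c_2(\widehat{X}) + K_{\widehat{X}}\cdot\widehat{D} + \sum_i\Gamma_i^2 + \sum_{i<j}\Gamma_i\cdot\Gamma_j.
\]
Since $\widehat{D}$ is SNC, the last term $\nu(\widehat{D}) := \sum_{i<j}\Gamma_i\cdot\Gamma_j$ counts the nodes of $\widehat{D}$, and adjunction on each smooth component $\Gamma_i$ gives $K_{\widehat{X}}\Gamma_i+\Gamma_i^2 = 2g(\Gamma_i)-2$. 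Thus
\[
c_2(\Omega^1_{\widehat{X}}(\log\widehat{D})) = c_2(\widehat{X}) + 2\sum_i g(\Gamma_i) - 2r + \nu(\widehat{D}),
\]
where $r$ is the total number of components of $\widehat{D}$.

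Next I would translate these invariants into data of $(X,D)$. Writing $\widehat{D} = \widetilde{D} + \sum_{k=1}^s E_k$, with $\widetilde{D}$ the strict transform of $D$ and $E_1,\dots,E_s$ the exceptional divisors of $f$, one has $E_k\cong\PP^1$, so $\sum_i g(\Gamma_i) = \sum_j g(\widetilde{D}_j)$ over the $c$ irreducible components of $D$. The exact sequence $0\to\OO_D\to\nu_*\OO_{D^\nu}\to Q\to 0$, where $Q$ has length $\delta_p$ at each singular point $p$, together with $(K_X+D)D = 2p_a(D)-2$, yields
\[
2\sum_j g(\widetilde{D}_j) = (K_X+D)\cdot D + 2c - 2\sum_p\delta_p.
\]
Combining this with $c_2(\widehat{X}) = c_2(X)+s$ (additivity of $c_2$ under point blowups), the equality $r = c+s$, and the decompositions $s=\sum_p s_p$, $\nu(\widehat{D})=\sum_p\nu_p$ into contributions above each singular point $p$ of $D$, everything collapses to
\[
c_2(\Omega^1_{\widehat{X}}(\log\widehat{D})) = c_2(X) + (K_X+D)\cdot D + \sum_p\bigl(\nu_p - s_p - 2\delta_p\bigr).
\]

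The main obstacle is then the purely local identity
\[
\nu_p = s_p + \gamma_p - 1 \quad \text{for each singular point } p\in D,
\]
which would finish the proof, since $\nu_p-s_p-2\delta_p = -(2\delta_p-\gamma_p+1)$. I would prove this by induction on the number of blowups: the exceptional locus of any iterated blowup of points lying over $p$ is a connected SNC tree of smooth rational curves, because each blowup of a point $q$ lying on at most two existing exceptional components produces a new $\PP^1$ meeting the strict transforms of those components at distinct smooth points, preserving both the SNC property and the tree structure of the dual graph. Such a tree on $s_p$ vertices contributes $s_p-1$ of the nodes of $\widehat{D}$ lying over $p$, and by minimality of the log resolution the $\gamma_p$ local branches of $D$ at $p$ have smooth, pairwise disjoint strict transforms, each meeting the exceptional tree transversally at a single smooth point, contributing the remaining $\gamma_p$ nodes. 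Hence $\nu_p = (s_p-1) + \gamma_p$, completing the proof.
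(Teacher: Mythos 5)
Your argument is correct and is essentially the paper's own proof: the same Chern-class computation from the defining exact sequence (the paper phrases it via the Chern character, you via the total Chern class, but the resulting identity $c_2(\Omega^1_{\widehat{X}}(\log \widehat{D})) = c_2(\widehat{X}) + (K_{\widehat{X}}+\widehat{D})\widehat{D} - \sum_{i<j}\Gamma_i\Gamma_j$ is the same), the same blowup bookkeeping ($c_2(\widehat{X})=c_2(X)+s$, the $\delta$-invariant genus formula), and the same tree count --- your local identity $\nu_p = s_p + \gamma_p - 1$ is exactly the paper's $\sum_{\Gamma_i\cup\Gamma_j\subset E_p}\Gamma_i\Gamma_j = |E_p|-1$ combined with $\Delta E_p = \gamma_p$. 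The one place your justification is incomplete is a singular point $p$ that is an ordinary node of two smooth components of $D$: there the minimal log resolution does nothing, so $s_p=0$, the strict transforms of the two branches are \emph{not} disjoint, and ``a tree on $s_p$ vertices contributing $s_p-1$ nodes'' is vacuous --- the identity $\nu_p = 1 = s_p+\gamma_p-1$ still holds by inspection, but you should either check this case separately or do as the paper does and elect to blow up such points anyway (which changes neither side of \eqref{CTBK3E001}) so that the strict transforms of the components of $D$ really are pairwise disjoint.
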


\begin{proof}
Let $\widehat{D} = \sum_{i=1}^n \Gamma_i$, where $\Gamma_i$ are the irreducible components of $\widehat{D}$. From the exact sequences
\[
\begin{tikzcd}
0 \ar{r} & \displaystyle{\Omega^1_{\widehat{X}}(\log \sum_{i=1}^{m-1} \Gamma_i)}
\ar{r} & \displaystyle{\Omega^1_{\widehat{X}}(\log \sum_{i=1}^m \Gamma_i)} \ar{r} & \OO_{\Gamma_m}
\ar{r} & 0
\end{tikzcd}
\]
for $m=1,\ldots,n$, we obtain
\[
\begin{aligned}
\ch(\Omega^1_{\widehat{X}}(\log \widehat{D})) &= \ch(\Omega^1_{\widehat{X}})
+ \sum_{m=1}^n \ch(\OO_{\Gamma_m})\\
&= \ch(\Omega^1_{\widehat{X}})
+ \sum_{m=1}^n (\ch(\OO_{\widehat{X}})-\ch(\OO_{\widehat{X}}(-\Gamma_m)))\\
&= K_{\widehat{X}} + \widehat{D} + \frac{1}2(K_{\widehat{X}}^2 - 2c_2(\widehat{X}) - \sum_{m=1}^n \Gamma_m^2)
\end{aligned}
\]
where $\ch(\bullet)$ is the Chern character. It follows that
\[
\begin{aligned}
c_2(\Omega^1_{\widehat{X}}(\log \widehat{D})) &= c_2(\widehat{X})
+ \frac{1}2(K_{\widehat{X}} + \widehat{D})^2 - \frac{1}2 K_{\widehat{X}}^2 + \frac{1}2\sum_{m=1}^n \Gamma_m^2\\
&= c_2(\widehat{X}) + (K_{\widehat{X}} + \widehat{D}) \widehat{D}
- \sum_{1\le i<j\le n} \Gamma_i \Gamma_j.
\end{aligned}
\]
Note that further blowing up $\widehat{X}$ at a singularity of $\widehat{D}$ does not change
$c_2(\Omega^1_{\widehat{X}}(\log \widehat{D}))$. The minimal log resolution of $(X,D)$ does not blow up all singularities
of $D$ in case that $D$ is reducible: if $D$ has an ordinary double point at $p$ where two components of $D$ meet
transversely, we do not need to blow up $X$ at $p$. On the other hand, we can choose to blow up $X$ at such $p$ since it
does not change $c_2(\Omega^1_{\widehat{X}}(\log \widehat{D}))$. This has the advantage of streamlining our argument.
Hence we choose a log resolution $(\widehat{X}, \widehat{D})$ of $(X,D)$ which is minimal with the properties that
$\widehat{D}$ has simple normal crossings and the proper transforms of the components of $D$ are disjoint from each
other.

Let us write
\[
\widehat{D} = \sum_{i=1}^n \Gamma_i = \Delta + \sum_{p\in D_s} E_p
\]
where $\Delta$ is the proper transform of $D$ under $\pi: \widehat{X}\to X$
and $E_p = \pi^{-1}(p)$ for $p\in D_s$, where $D_s$ is the set of singularities of $D$. Clearly, $E_p$ is a tree of smooth rational curves
for all $p\in D_s$. 
Then the above equality takes the form
\[
\begin{aligned}
c_2(\Omega^1_{\widehat{X}}(\log \widehat{D})) &= c_2(\widehat{X})
+ (K_{\widehat{X}} + \Delta) \Delta + \sum_{p\in D_s}
(K_{\widehat{X}} + E_p) E_p + \sum_{p\in D_s} \Delta E_p\\
&\quad - \sum_{p\in D_s} \sum_{\substack{1\le i<j\le n\\
\Gamma_i\cup \Gamma_j\subset E_p}} \Gamma_i \Gamma_j.
\end{aligned}
\]
Since $\Delta$ is the normalisation of $D$,
\[
\begin{aligned}
(K_{\widehat{X}} + \Delta) \Delta &= 2p_a(\Delta) - 2 = 2p_a(D) - 2 - 2 \sum_{p\in D} \delta_p\\
&= (K_X + D) D - 2 \sum_{p\in D} \delta_p.
\end{aligned}
\]
For every $p\in D_s$, $p_a(E_p) = 0$ and hence
\[
\sum_{p\in D}
(K_{\widehat{X}} + E_p) E_p = -2 \sum_{p\in D_s} 1.
\]
It is also clear that $\Delta E_p$ equals the number of local branches of $D$ at $p\in D_s$. Therefore
\[
\sum_{p\in D_s} \Delta E_p = \sum_{p\in D_s} \gamma_p.
\]
Since $E_p$ is a tree of smooth rational curves,
\[
\sum_{\substack{1\le i<j\le n\\
\Gamma_i\cup \Gamma_j\subset E_p}} \Gamma_i \Gamma_j = |E_p| - 1
\]
for $p\in D_s$, where $|E_p|$ is the number of irreducible components of $E_p$. Finally,
\[
c_2(\widehat{X}) = c_2(X) + \sum_{p\in D_s} |E_p|.
\]
Combining all the above, we obtain \eqref{CTBK3E001}.
\end{proof}

For convenience, we write
\[
\mu_p = 2\delta_p - \gamma_p + 1.
\]
Over the complex numbers, $\mu_p$ agrees with the Milnor number of $D$ at $p$ (see \cite[Theorem 10.5]{milnor}). However
this can fail in positive characteristic, so we will call $\mu_p$ the {\em pseudo-Milnor number} of $D$ at $p$.

We now work towards constructing a lower bound for $(K_{\widehat{X}} + \widehat{D})^2$ in terms of $(K_X+D)^2$ and the
local contribution of the singularities of $D$. The following lemma is basically due to Orevkov--Zaidenberg \cite[\S
4]{ONSPC}, but we give here a simple proof that works in all characteristics.

\begin{Lemma}\label{CTBK3LEMOZ}
Let $X$ be a smooth projective surface over an algebraically closed field and $D$ be a reduced curve on $X$. Let $(\widehat{X}, \widehat{D})$ be the minimal log resolution of $(X,D)$. Then
\begin{equation}\label{CTBK3E005}
(K_{\widehat{X}} + \widehat{D})^2 \ge (K_X+D)^2 -
\sum_{p\in D} \left(1 - \frac{1}{m_p}\right)\mu_p
\end{equation}
where $m_p$ and $\mu_p$ are the multiplicity and pseudo-Milnor number of $D$ at $p$, respectively.
\end{Lemma}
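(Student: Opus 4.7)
The plan is to reduce the inequality to a local statement at each singular point $p \in D$ and then establish that statement via a blow-up recursion. Writing $B := K_{\widehat X} + \widehat D - \pi^*(K_X+D) = \sum_i (a_i + 1 - m_i)E_i$, which is $\pi$-exceptional, the projection formula gives $(K_{\widehat X}+\widehat D)^2 = (K_X+D)^2 + B^2$, and $B^2 = \sum_p B_p^2$ decomposes into local contributions $B_p$ supported over each singular point. It therefore suffices to prove the local inequality $-B_p^2 \le (1-1/m_p)\mu_p$.

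For the recursion, factor $\pi = \sigma \circ \tau$, where $\sigma \colon Y \to X$ is the blow-up at $p$ with exceptional divisor $E$ and $\tau$ is the minimal log resolution of $(Y, D'+E)$ for $D' = \sigma^{-1}_*D$. From $\pi^*(K_X+D) = \tau^*(K_Y + D' + (m_p-1)E)$ and the definition $B(Y, D'+E) = K_{\widehat X} + \widehat D - \tau^*(K_Y+D'+E)$, one obtains $B(X,D) = B(Y, D'+E) + (2-m_p)\tau^* E$. Since $B(Y, D'+E)$ is $\tau$-exceptional and $(\tau^* E)^2 = E^2 = -1$, the projection formula yields
\[
-B_p(X,D)^2 = (m_p - 2)^2 + \sum_{q \mapsto p} \bigl(-B_q(Y, D'+E)^2\bigr),
\]
with the sum ranging over the non-SNC points $q$ of $D'+E$ lying above $p$. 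Iterating gives the explicit formula $-B_p^2 = \sum_k (m_k-2)^2$, where $m_k$ is the multiplicity of the reduced total transform at the $k$-th centre in the minimal log resolution of $p$.

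The remaining combinatorial inequality $\sum_k (m_k-2)^2 \le (1 - 1/m_p)\mu_p$ I would prove by induction on the number of blow-ups, writing $m_k = \tilde m_k + s_k$ (with $\tilde m_k$ the multiplicity of $D$'s proper transform at the $k$-th centre, $s_k$ the number of earlier exceptional divisors through it) and using the standard formula $2\delta_p = \sum_k \tilde m_k(\tilde m_k - 1)$. The main obstacle is that the naive step-by-step inductive bound is not tight: already on a tacnode, the estimate $(1-1/3)\cdot 4 = 8/3$ coming from the intermediate ordinary triple point exceeds the target $3/2$. The proof therefore cannot apply the inductive hypothesis one blow-up at a time but must telescope globally along the resolution tree, balancing the essential blow-ups ($\tilde m_k \ge 2$) against the SNC-clearing ones ($\tilde m_k = 1$) and absorbing the branch count $\gamma_p$ via the valence constraints on the exceptional divisors.
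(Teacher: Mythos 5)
Your global reduction is sound and is essentially equivalent to what the paper does locally: $B=K_{\widehat X}+\widehat D-\pi^*(K_X+D)$ is $\pi$-exceptional, so $(K_{\widehat X}+\widehat D)^2=(K_X+D)^2+B^2$ with $B^2=\sum_p B_p^2$, and expanding $B$ in the orthogonal basis of total transforms gives $-B_p^2=\sum_k(m_k-2)^2$, where $m_k$ is the multiplicity of the reduced total transform at the $k$-th centre; this is a reformulation of the paper's local inequality \eqref{CTBK3E014}. The problem is that your proof stops exactly where the lemma becomes nontrivial. The inequality $\sum_k(m_k-2)^2\le(1-1/m_p)\mu_p$ is the entire content of the statement, you correctly observe that the obvious blow-up-by-blow-up induction fails (your tacnode computation is right: bounding the contribution of the intermediate ordinary triple point by $(1-1/3)\cdot 4=8/3$ overshoots the target $3/2$), and what you offer in its place is only a description of what a proof ``must'' do --- telescope globally, balance essential against SNC-clearing blow-ups, absorb $\gamma_p$ via valence constraints --- with no actual argument. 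That is a genuine gap, not a verification left to the reader.

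The paper closes this gap by changing variables from your $m_k$ to the multiplicity sequence $m_1,\dots,m_a$ of the \emph{proper} transform of $D$ at the successive centres (your $\tilde m_k$). These satisfy the classical identities $\sum_i m_i(m_i-1)=2\delta_p=\mu_p+\gamma_p-1$ and $(K_{\widehat X/X}^2)_p+(K_{\widehat X}(\Delta-\pi^*D))_p=\sum_i(m_i-1)$, and since $m_i\le m_p$ for all $i$ one gets in one line
\[
\sum_i(m_i-1)\ \ge\ \frac1{m_p}\sum_i m_i(m_i-1)\ =\ \frac{\mu_p+\gamma_p-1}{m_p}\ \ge\ \frac{\mu_p}{m_p},
\]
which replaces your telescoping entirely and reduces \eqref{CTBK3E014} to the residual estimate $(\gamma_p-1)+K_{\widehat X}E_p\ge 0$. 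That estimate is then proved directly: $K_{\widehat X}E_p=-2-E_p^2\ge-1$ disposes of $\gamma_p\ge2$, and for a unibranch singularity one tracks $K_{X_i}E_i$ through the blow-up tower, noting it starts at $-1$, increases by $1$ each time a centre is a singular point of the exceptional tree, and that this must happen at the last blow-up. If you want to rescue your route, note that your invariant satisfies $\sum_k(m_k-2)^2=\mu_p-(\gamma_p-1)-K_{\widehat X}E_p-\sum_i(m_i-1)$, so the same two inputs convert your unproven combinatorial inequality into the paper's argument; as written, the decisive step is missing.
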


\begin{proof}
As in the proof of Lemma \ref{CTBK3LEMC2LOGDIFF}, further blowing up $\widehat{X}$ at a singularity of $\widehat{D}$ does not change
$(K_{\widehat{X}} + \widehat{D})^2$. So we choose a log resolution $(\widehat{X}, \widehat{D})$ of $(X,D)$ which is minimal with the properties that $\widehat{D}$ has simple normal crossings and the proper transforms of the components of $D$ are disjoint from each other.

The proof of Lemma \ref{CTBK3LEMC2LOGDIFF} already gives
\begin{equation}\label{CTBKE009}
(K_{\widehat{X}} + \widehat{D}) \widehat{D} = (K_X + D) D -
\sum_{p\in D_s} \mu_p + \sum_{p\in D_s} (\gamma_p - 1).
\end{equation}
From now on we denote $K_{\widehat{X}/X}=K_{\widehat{X}}-\pi^*K_X$. Then, \eqref{CTBKE009} and the fact that
$K_{\widehat{X}/X}.\pi^*F=0$ for any divisor $F$ on $X$ yield
\[
\begin{aligned}
(K_{\widehat{X}} + \widehat{D})^2 - (K_X + D)^2 &= - \sum_{p\in D_s} \mu_p
+ \sum_{p\in D_s} (\gamma_p - 1) + (K_{\widehat{X}}^2 - K_X^2)\\
&\quad + \sum_{p\in D_s} K_{\widehat{X}} E_p + (K_{\widehat{X}} \Delta - K_X D)\\
&= - \sum_{p\in D_s} \mu_p
+ \sum_{p\in D_s} (\gamma_p - 1) + K_{\widehat{X}/X}^2\\
&\quad + \sum_{p\in D_s} K_{\widehat{X}} E_p + K_{\widehat{X}} (\Delta - \pi^* D).
\end{aligned}
\]
Thus, \eqref{CTBK3E005} holds as long as we can prove
\begin{equation}\label{CTBK3E014}
(\gamma_p - 1) + K_{\widehat{X}} E_p + (K_{\widehat{X}/X}^2)_p + (K_{\widehat{X}} (\Delta - \pi^* D))_p \ge \frac{\mu_p}{m_p} 
\end{equation}
for all $p\in D_s$. The problem is local so we work in a formal neighbourhood of a point $p\in D_s$ in $X$. For
simplicity, we drop the subscript $p$ in all notation so that $m = m_p$, $\mu = \mu_p$, $\gamma = \gamma_p$ and $E =
E_p$.

We can factor $\pi: \widehat{X}\to X$ into a sequence of blowups:
\[
\begin{tikzcd}[column sep=40pt]
\widehat{X} = X_a \ar{r}{\pi_{a,a-1}} & X_{a-1} \ar{r}{\pi_{a-1,a-2}}
& \ldots \ar{r} & X_1 \ar{r}{\pi_{1,0}} & X_0 = X
\end{tikzcd}
\]
where each $\pi_{i,i-1}: X_i\to X_{i-1}$ is the blowup of $X_{i-1}$ at one point for $i=1,2,\ldots,a$. Let $\pi_{i,j} = \pi_{j+1,j} \circ \pi_{j+2,j+1}
\circ \ldots \circ \pi_{i,i-1}$ be the birational map $X_i\to X_j$ for $0\le j < i\le a$ and let $F_i$ be the exceptional divisor of $\pi_{i,i-1}: X_i\to X_{i-1}$ for $i=1,2,\ldots,a$.
Then
\[
\begin{aligned}
K_{\widehat{X}/X} &= \pi_{a,1}^* F_1 + \pi_{a,2}^* F_2 + \ldots + \pi_{a,a-1}^* F_{a-1} + F_a\\
\Delta &= \pi^* D - m_1 \pi_{a,1}^* F_1 - m_2 \pi_{a,2}^* F_2 - \ldots - m_{a-1}\pi_{a,a-1}^* F_{a-1} - m_a F_a
\end{aligned}
\]
for some $m_i\in \ZZ_+$ satisfying that
\[
m = m_1 = \max_{1\le i \le a} m_i.
\]
It follows (see, e.g., \cite[Theorem 5.4.13]{djp}) that
\[
\begin{aligned}
\mu + \gamma - 1 = 2\delta &= \sum_{i=1}^a m_i(m_i - 1)\\
K_{\widehat{X}/X}^2 + K_{\widehat{X}} (\Delta - \pi^* D) &= \sum_{i=1}^a (m_i - 1).
\end{aligned}
\]
Therefore, \eqref{CTBK3E014} holds provided that we can prove
\begin{equation}\label{CTBK3E015}
(\gamma - 1) + K_{\widehat{X}} E\ge 0. 
\end{equation}

We recall that $E = \pi^{-1}(p)$ is a tree of smooth rational curves. Thus from the adjunction formula,
\[
K_{\widehat{X}} E = (K_{\widehat{X}} + E) E - E^2 = -2 - E^2 \ge -1
\]
where $E^2\le -1$ because the components of $E$ have negative definite intersection matrix. So we have \eqref{CTBK3E015} if $\gamma \ge 2$. Otherwise, $\gamma = 1$, i.e., $D$ has a locally irreducible or unibranch singularity at $p$. We claim that $K_{\widehat{X}} E \ge 0$ in this case.

Let $E_i = \pi_{i,0}^{-1}(p)$ for $i=1,2,\ldots,a$. Then $E_1 = F_1$ and
$K_{X_1} E_1 = -1$. If $\pi_{i,i-1}: X_i \to X_{i-1}$ is the blowup of $X_{i-1}$ at a smooth point of $E_{i-1}$, then
\[
E_i = \pi_{i,i-1}^* E_{i-1} \text{ and }
K_{X_i} E_i = K_{X_{i-1}} E_{i-1}.
\]
Otherwise, if $\pi_{i,i-1}: X_i \to X_{i-1}$ is the blowup of $X_{i-1}$ at a singular point of $E_{i-1}$, then
\[
E_i = \pi_{i,i-1}^* E_{i-1} - F_i \text{ and }
K_{X_i} E_i = K_{X_{i-1}} E_{i-1} + 1.
\]
In conclusion, we have
\[
K_{X_1} E_1 = -1 \text{ and }
K_{X_i} E_i = \begin{cases}
K_{X_{i-1}} E_{i-1} & \text{if } \pi_{i,i-1}(F_i) \not\in (E_{i-1})_\text{sing}\\
K_{X_{i-1}} E_{i-1} + 1 & \text{if } \pi_{i,i-1}(F_i) \in (E_{i-1})_\text{sing}
\end{cases}
\]
for $2\le i\le a$. Therefore, $K_{\widehat{X}} E = K_{X_a} E_a \ge 0$ as long as one of $\pi_{i,i-1}$ is the blowup of
$X_{i-1}$ at a singular point of $E_{i-1}$. For a locally irreducible singularity $p\in D_s$, it is easy to see that
$\pi_{a,a-1}: X_a \to X_{a-1}$ blows up $X_{a-1}$ at a singular point of $E_{a-1}$. Consequently $K_{\widehat{X}} E
\ge 0$ when $\gamma = 1$. This proves \eqref{CTBK3E015} and hence \eqref{CTBK3E014}, giving \eqref{CTBK3E005}.
\end{proof}

Combining \eqref{CTBK3E003}, \eqref{CTBK3E001} and \eqref{CTBK3E005}, we obtain
\begin{equation}\label{CTBK3E006}
(K_X+D)^2 -
\sum_{p\in D} \left(1 - \frac{1}{m_p}\right)\mu_p
\le 3 \big(c_2(X) + (K_X + D) D - \sum_{p\in D} \mu_p\big).
\end{equation}

We are now in a position to put all the above together for K3 surfaces in the characteristic zero case, where the
BMY inequality holds.

\begin{Proposition}\label{CTBK3PROPCUSPRC}
Let $D\subset X$ be an integral curve of geometric genus $g$ in a K3 surface over an algebraically closed field of
characteristic zero. If
\[
D^2 > 4690 + 550g + 16g^2,
\]
then $D$ has at least one locally reducible singularity.
\end{Proposition}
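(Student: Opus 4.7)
The plan is to argue by contradiction. Suppose every singular point of $D$ is locally irreducible, so that $\gamma_p = 1$ for each singular $p$ and hence $\mu_p = 2\delta_p - \gamma_p + 1 = 2\delta_p$ throughout. The goal is to use the master inequality \eqref{CTBK3E006} to derive an upper bound on $D^2$ that contradicts the hypothesis.

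Substituting $\mu_p = 2\delta_p$, $K_X = 0$, and $c_2(X) = 24$ into \eqref{CTBK3E006}, and writing $\delta_D = \sum_p \delta_p$, the adjunction formula $D^2 = 2p_a(D) - 2 = 2\delta_D + 2g - 2$ yields
\[
D^2 - 2\delta_D + 2\sum_{p} \frac{\delta_p}{m_p} \le 72 + 3D^2 - 6\delta_D.
\]
Replacing $4\delta_D = 2D^2 + 4 - 4g$ makes the $D^2$ contributions cancel, leaving the clean estimate
\[
\sum_{p\in D_s} \frac{\delta_p}{m_p} \le 34 + 2g.
\]

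It remains to convert this into an upper bound on $\delta_D$ itself. The standard inequality $m_p(m_p-1) \le 2\delta_p$, after division by $m_p$, rearranges as $m_p \le 1 + 2\delta_p/m_p$. Setting $x_p := \delta_p/m_p \ge 0$ therefore gives
\[
\delta_p = m_p x_p \le (1 + 2x_p)x_p = x_p + 2x_p^2.
\]
Summing and using $\sum_p x_p^2 \le (\sum_p x_p)^2$ (valid because every $x_p \ge 0$) together with the bound $C := \sum_p x_p \le 34+2g$, I conclude
\[
\delta_D \le C + 2C^2 \le (34+2g)\bigl(1 + 2(34+2g)\bigr) = (34+2g)(69+4g).
\]
Expanding, $D^2 = 2\delta_D + 2g - 2 \le 2(34+2g)(69+4g) + 2g - 2 = 4690 + 550g + 16g^2$, contradicting the hypothesis.

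The only subtle point is the rearrangement $m_p \le 1 + 2\delta_p/m_p$: this is the precise form that allows the log BMY output $\sum \delta_p/m_p \le 34+2g$ to be promoted to a polynomial bound on $\delta_D$, and the otherwise mysterious constants $4690$, $550$, $16$ in the statement are calibrated exactly so that the two sides match. No other obstacle is anticipated; the remainder is straightforward algebra.
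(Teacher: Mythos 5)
Your argument is correct and follows the paper's proof in all essentials: both apply \eqref{CTBK3E006} under the locally irreducible assumption (so $\mu_p = 2\delta_p$), arrive at the bound $\sum_p \delta_p/m_p \le 34+2g$, and then use $m_p(m_p-1)\le 2\delta_p$ to convert this into an upper bound on $D^2$. The only difference is in the final piece of elementary algebra --- the paper writes $\mu_p/m_p \ge \sqrt{\mu_p+\tfrac14}-\tfrac12$ and invokes subadditivity of that concave function, whereas you rearrange to $\delta_p \le x_p + 2x_p^2$ with $x_p = \delta_p/m_p$ and use $\sum_p x_p^2 \le \bigl(\sum_p x_p\bigr)^2$ --- and both routes produce the identical bound $4690+550g+16g^2$.
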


\begin{proof}
Suppose that $D$ only has locally irreducible singularities. Then
\begin{equation}\label{CTBK3E008}
(K_X + D) D - \sum_{p\in D} \mu_p 
= (K_X + D) D - 2\sum_{p\in D} \delta_p = 2g - 2.
\end{equation}
By \eqref{CTBK3E006} and ${\rm c}_2(X)=24$, we have
\begin{equation}\label{CTBK3E007}
D^2 - \sum_{p\in D} \left(1 - \frac{1}{m_p}\right) \mu_p \le 66 + 6g.
\end{equation}
Combining \eqref{CTBK3E008} and \eqref{CTBK3E007}, we have
\begin{equation}\label{CTBK3E009}
\sum_{p\in D} \frac{\mu_p}{m_p}\le 68 + 4g.
\end{equation}
On the other hand,
\begin{equation}\label{CTBK3E010}
\mu_p \ge m_p(m_p - 1)
\end{equation}
for all $p\in D$.
Putting \eqref{CTBK3E008}-\eqref{CTBK3E010} together gives
\[
\begin{aligned}
68 + 4g\ge \sum_{p\in D} \frac{\mu_p}{m_p} \ge \sum_{p\in D}
\left(
\sqrt{\mu_p + \frac{1}4} - \frac{1}2
\right) \ge \sqrt{D^2 + \frac{9}4 - 2g} - \frac{1}2
\end{aligned}
\]
where we note that we have used that the function $f(x)=\sqrt{x+\frac14}-\frac12$ vanishes at 0 and has everywhere negative second
derivative, hence is concave and $\sum f(x_i) \geq f(\sum x_i)$ for positive real $x_i$. It follows that $D^2 \le 4690 +
550g + 16g^2$. Therefore, $D$ has at least one locally reducible singularity if $D^2 > 4690 + 550g + 16g^2$.
\end{proof}

\begin{Proposition}\label{CTBK3PROPINTSC}
Let $D_1, D_2\subset X$ be two distinct integral curves in a K3 surface $X$ over an algebraically closed field of
characteristic 0. If
\[
2 D_1D_2 > \left(\sqrt{4D_1^2 + 9} + \sqrt{4D_2^2 + 9} + 2\right)(37 + D_1^2 + D_2^2) + 1, 
\]
then $D_1$ and $D_2$ meet at (at least) two distinct points.
\end{Proposition}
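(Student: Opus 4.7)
The strategy mirrors that of Proposition \ref{CTBK3PROPCUSPRC}: argue by contradiction by applying inequality \eqref{CTBK3E006} to the reduced divisor $D = D_1 + D_2$ and then extract useful information at the point where $D_1$ meets $D_2$. Suppose $D_1 \cap D_2 = \{p\}$ consists of a single point. Using $K_X = 0$ and $c_2(X)=24$, inequality \eqref{CTBK3E006} rearranges into
\[
\sum_{q \in D}\left(2 + \frac{1}{m_q}\right)\mu_q \;\le\; 2D^2 + 72,
\]
and since every summand is non-negative, isolating the contribution at $p$ gives $(2 + 1/m_p(D))\,\mu_p(D) \le 2D^2 + 72$.

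The next step is to bound $\mu_p(D)$ from below and $m_p(D)$ from above in terms of $D_1^2, D_2^2, D_1 D_2$. Because $p$ is the unique intersection point, the local invariants split as
\[
\delta_p(D) = \delta_p(D_1) + \delta_p(D_2) + D_1 D_2, \quad \gamma_p(D) = \gamma_p(D_1) + \gamma_p(D_2), \quad m_p(D) = m_p(D_1) + m_p(D_2),
\]
from which a short computation yields $\mu_p(D) = \mu_p(D_1) + \mu_p(D_2) + 2D_1 D_2 - 1 \ge 2D_1 D_2 - 1$. For the multiplicity bound I would combine the universal inequality $m_p(D_i)(m_p(D_i)-1)\le 2\delta_p(D_i)$ with the adjunction estimate $2\delta(D_i) \le D_i^2 + 2$ on the K3 (from $p_a(D_i) = 1 + D_i^2/2$ and $g_i \ge 0$); solving the resulting quadratic gives $m_p(D_i) \le (1 + \sqrt{4D_i^2 + 9})/2$, and hence $2 m_p(D) \le 2 + \sqrt{4D_1^2+9} + \sqrt{4D_2^2+9}$.

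Substituting the lower bound on $\mu_p(D)$ into $(2 + 1/m_p)\mu_p \le 2D^2 + 72$, expanding $2D^2 = 2D_1^2 + 4D_1 D_2 + 2D_2^2$, and cancelling the $4D_1 D_2$ terms from both sides, one arrives at $2D_1 D_2 - 1 \le 2 m_p(D)(D_1^2 + D_2^2 + 37)$. Plugging in the bound on $m_p(D)$ then recovers exactly the stated inequality, contradicting the hypothesis. The only real subtlety is tracking the constants so that the adjunction estimate produces $\sqrt{4D_i^2 + 9}$ rather than something weaker: this comes from the $+2$ in $2\delta(D_i) \le D_i^2 + 2$ surviving through the quadratic formula, after which the rest is routine algebra and the standard additivity formulas for $\delta$, $\gamma$ and $m$ at a singular point of a reducible curve.
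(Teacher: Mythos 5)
Your proof is correct and follows essentially the same route as the paper's: both apply \eqref{CTBK3E006} to $D=D_1+D_2$, use the additivity $\mu_{D,q}=\mu_{D_1,q}+\mu_{D_2,q}+2D_1D_2-1$ and $m_{D,q}=m_{D_1,q}+m_{D_2,q}$ at the unique intersection point, and bound $m_{D_i,q}$ via $m(m-1)\le 2\delta\le D_i^2+2$ to get exactly the constant $\sqrt{4D_1^2+9}+\sqrt{4D_2^2+9}+2$. The only (harmless) difference is that you discard the non-negative terms $\mu_{D_i,p}$ immediately, whereas the paper carries the sums $\sum_i\sum_p\mu_{D_i,p}$ through and cancels them at the end; the resulting bound is identical.
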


\begin{proof}
Suppose that $D_1$ and $D_2$ meet at a unique point $q$. Applying \eqref{CTBK3E006} to $(X, D = D_1 + D_2)$, we have
\begin{equation}\label{CTBK3E011}
D^2 - \sum_{p\in D} \left(1 - \frac{1}{m_{D,p}}\right)\mu_p\le 72 + 3(D^2 - \sum_{p\in D} \mu_p)
\end{equation}
where we use $\mu_{C,p}$ and $m_{C,p}$ to denote the pseudo-Milnor number and multiplicity of a reduced curve $C$ at $p$, respectively.

Note the following simple facts for $i=1,2$, $p\in D$ and $D_1\cap D_2=\{q\}$ as above
\begin{equation}\label{CTBK3E012}
\begin{aligned}
\mu_{D,p} &= \mu_{D_1,p} + \mu_{D_2,p} + 2 (D_1.D_2)_p - 1 \\
        &= \mu_{D_1,p} + \mu_{D_2,p} -1 \text{ if }p\neq q\\
\mu_{D,q} &= \mu_{D_1,q} + \mu_{D_2,q} + 2D_1D_2 - 1\\
m_{D,p} &= m_{D_1,p} + m_{D_2,p} \le M:= \sqrt{D_1^2 + \frac{9}4} + \sqrt{D_2^2 + \frac94} + 1.
\end{aligned}
\end{equation}
Combining \eqref{CTBK3E011} and \eqref{CTBK3E012}, we obtain
\[
\begin{aligned}
75 - 3\sum_{i=1}^2 \sum_{p\in D_i} \mu_{D_i,p}
=&\ 72 + 3 (D^2 - \sum_{p\in D} \mu_p) - 3(D_1^2 + D_2^2)\\
\ge&\ D^2 - \sum_{p\in D} \left(1 - \frac{1}{m_{D,p}}\right)\mu_{D,p}
- 3(D_1^2 + D_2^2)\\
\ge&\ 2(D_1D_2 - D_1^2 - D_2^2) - \sum_{p\in D} \left(1 - \frac{1}M\right)\mu_{D,p}\\
=&\ \frac{2}M D_1D_2 - 2(D_1^2 + D_2^2) +
\frac{M-1}{M}\left(1-\sum_{i=1}^2 \sum_{p\in D_i} \mu_{D_i,p}\right)
\end{aligned}
\]
Hence
\[
75 \ge \frac{2}M D_1D_2 + \frac{M-1}{M} - 2(D_1^2 + D_2^2)
\]
and the proposition follows.
\end{proof}

The lower bounds in the above propositions are almost certainly not optimal. Better results can be achieved with improvement to \eqref{CTBK3E005}
(cf. \cite{MOEONCMN}).

We are now ready to prove Theorem A.

\begin{proof}[Proof of Theorem A]
Let us first prove it for $g = 1$.

By \cite[Theorem A]{regenerationinfinite}, there are infinitely many integral rational curves $C_n$ on $X$. Suppose that
$C_n^2$ is unbounded. Then $C_n$ has a locally reducible singularity by Proposition \ref{CTBK3PROPCUSPRC} for $C_n^2$
sufficiently large. Such $C_n$ can be deformed to a non-isotrivial family of curves of geometric genus 1 by Proposition \ref{prop:maxmod1}.

Suppose that $C_n^2\le c$ for all $n$. We claim that
\begin{equation}\label{CTBK3E016}
\varlimsup_{\min(m,n)\to \infty} C_m C_n = \infty.
\end{equation}
Fixing $N\in \ZZ_+$, since $\rank_\ZZ\Pic(X)\le 20$, $C_N,C_{N+1},\ldots,C_{N+20}$ are linearly dependent in
$\Pic(X)_\QQ$. Suppose that
\begin{equation}\label{CTBK3E013}
a_0 C_N + a_1 C_{N+1} + \ldots + a_{20} C_{N+20} = 0
\end{equation}
in $\Pic(X)$ for some integers $a_i$, not all zero. Since $C_i$ are effective, $a_i$ cannot be all positive or negative. Let us rewrite \eqref{CTBK3E013} as
\[
F = \sum_{a_i>0} a_i C_{N+i} = - \sum_{a_j < 0} a_j C_{N+j}.
\]
Since $C_N, C_{N+1}, \ldots, C_{N+20}$ are distinct integral curves, it is easy to see that $F$ is nef. This implies
that there are
only finitely many integral rational curves $R$ such that $FR = 0$, since if $F^2=0$ then $F$ can only be zero on the (up to 24)
singular fibres of the elliptic fibration induced by $F$, and if $F^2>0$ then from the Hodge Index Theorem the
orthogonal space $F^\perp$ in the effective cone is negative definite and spanned by finitely many $-2$-curves. Hence there
exists $m\ge N$ such that $FC_m \ge 1$. Then $C_m + 2F$ is nef and big and hence
\[
\lim_{n\to\infty} (C_m+2F) C_n = \infty.
\]
Thus there exists $C\in \{C_N,C_{N+1},\ldots,C_{N+20}, C_m\}$ such that $C C_n$ is unbounded. This proves \eqref{CTBK3E016}.

By Proposition \ref{CTBK3PROPINTSC}, $C_m$ and $C_n$ meet at (at least) two distinct points for $C_m C_n$ sufficiently
large since $C_m^2\le c$ and $C_n^2 \le c$. There are infinitely many such pairs $C_m$ and $C_n$ by \eqref{CTBK3E016}
and
\[
\varlimsup_{\min(m,n)\to \infty} (C_m + C_n)^2 = \infty.
\]
Such $C_m\cup C_n$ can be deformed to a non-isotrivial family of curves of geometric genus 1 by Proposition
\ref{prop:maxmod2}, which as pointed out above will have unbounded self-intersection.  This proves the theorem for
$g=1$. The remaining cases follow from Propositions \ref{prop:maxmod1} and \ref{CTBK3PROPCUSPRC} by induction.
\end{proof}

\section{An algebraic Proof of Kobayashi's Theorem}

We say that a vector bundle $E$ on a quasi-projective variety $X$ is {\em $\QQ$-effective} if
\[
\HH^0(X, \Sym^m E) \ne 0
\]
for some positive integer $m$, where $\Sym^m E$ is the $m$-th symmetric product of $E$. We call $E$ {\em
pseudoeffective} if for every $n\in \ZZ_+$, there exists $m\in \ZZ_+$ such that
\[
\HH^0(X,\Sym^{mn} E \otimes \OO_X(mA))\ne 0,
\]
where $A$ is a fixed ample divisor on $X$. Alternatively, let
\[
Y = \PP(E^\vee) = \Proj(\Sym^\bullet E) = \Proj \bigoplus_{m\ge 0} \Sym^m E
\]
be the projectivisation of $E^\vee$ and let $\OO_Y(1)$ be the tautological bundle of $Y$ over $X$. By the Leray spectral
sequence, the $\QQ$-effectivity (resp.\ pseudoeffectivity) of $E$ coincides with that of $\OO_Y(1)$.

Let now $X$ be a K3 surface and let $Y = \Proj(S^\bullet \Omega^1_X)$
with $L = \OO_Y(1)$ being the tautological bundle of $\pi: Y\to X$. The following follows easily from Hodge theory over
the complex numbers, whereas in positive characteristic is a theorem of Rudakov--Shafarevich \cite{RSVectorfields} (see also Nygaard
\cite{nygaard}).
\begin{Theorem}\label{thm:no 1-forms}
    Let $X$ be a K3 surface over an algebraically closed field. Then $\HH^0(X, \Omega^1_X)=0$.
\end{Theorem}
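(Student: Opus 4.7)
The plan is to handle characteristic zero and positive characteristic separately, since different tools are naturally available in each.

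Over an algebraically closed field of characteristic zero, I would deduce the result directly from Hodge theory. Since $X$ is simply connected by assumption, $\HH^1(X,\ZZ) = \mathrm{Hom}(\pi_1(X),\ZZ) = 0$, and hence $\HH^1(X,\CC) = 0$. The Hodge decomposition $\HH^1(X,\CC) = \HH^0(X,\Omega^1_X) \oplus \HH^1(X,\OO_X)$ then forces both summands to vanish; in particular $\HH^0(X,\Omega^1_X) = 0$.

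In positive characteristic the Hodge decomposition can fail, so a different argument is needed. The key observation is that on any K3 surface $\Omega^1_X$ is self-dual: since $K_X = \bigwedge^2 \Omega^1_X$ is trivial, a choice of nowhere-vanishing section $\sigma \in \HH^0(X,K_X)$ induces an isomorphism $T_X \xrightarrow{\sim} \Omega^1_X$ via $v \mapsto \iota_v \sigma$. Therefore $\HH^0(X,\Omega^1_X) \cong \HH^0(X,T_X)$, and the statement reduces to the fact that a K3 surface admits no non-trivial global vector fields, which is precisely the cited theorem of Rudakov--Shafarevich (with the later $p$-adic proof of Nygaard).

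The genuine content therefore lies in the positive characteristic case and rests on the Rudakov--Shafarevich non-existence theorem for vector fields, which I would take as a black box given the references already cited. The only additional point worth checking is that the paper's definition of a K3 surface (simply connected with trivial canonical) produces a surface to which Rudakov--Shafarevich applies, but this is standard from the Bombieri--Mumford classification of surfaces with $K_X=0$, which singles out the classical K3 type as the unique simply connected possibility.
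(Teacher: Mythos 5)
Your proposal is correct and follows essentially the same route as the paper: the paper offers no argument for this theorem beyond invoking Hodge theory in characteristic zero and citing Rudakov--Shafarevich (and Nygaard) in positive characteristic, which is exactly what you do. The one detail you add --- the isomorphism $T_X \cong \Omega^1_X$ obtained by contracting against a trivialising $2$-form, needed to pass from the cited non-existence of vector fields to the vanishing of $1$-forms --- is standard and implicit in the paper's citation.
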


See Proposition \ref{prop:unramifiedno1forms} for a simple, conditional algebraic proof of the above. In what follows we will
give an algebraic proof of Kobayashi's Theorem (i.e., Theorem B of the introduction), by reducing it to the above. The proof
in fact works in arbitrary characteristic under the following, minimal assumption.
\begin{Hypothesis}\label{CTBK3E002}
There exists an unramified morphism $f:E\to X$ from a smooth genus 1 curve
which deforms in the expected dimension and with maximal moduli.
\end{Hypothesis}
In characteristic zero, Theorem A (in combination with Proposition \ref{prop:ac}) produces infinitely many such curves, whereas in positive
characteristic we are not able to produce such a curve, although in remarks after the proof we will give various cases
in which such a curve does exist. 

\begin{Theorem}\label{thm:kobayashi}
Let $X$ be a K3 surface over an algebraically closed field.  If we assume Hypothesis \eqref{CTBK3E002}, then
\[\HH^0(X, \Sym^m\Omega^1_X)=0 \text{ for } m\geq1.\]
\end{Theorem}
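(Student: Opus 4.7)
Set $Y=\Proj(\Sym^\bullet\Omega^1_X)=\PP(T_X)$ with tautological line bundle $L=\OO_Y(1)$, so that $\HH^0(Y,L^m)\cong\HH^0(X,\Sym^m\Omega^1_X)$; the goal reduces to showing this vanishes for all $m\ge 1$. Suppose for contradiction $0\neq s\in\HH^0(Y,L^m)$. Since $L\cdot F=1$ on a fiber $F$ of $\pi_X:Y\to X$, the divisor $\div(s)\subset Y$ is a degree-$m$ multisection of $\pi_X$, singling out at each $x\in X$ at most $m$ ``integral directions'' in $\PP(T_{X,x})$, namely the roots of the polynomial $s(x)\in\Sym^m\Omega^1_{X,x}$.

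Invoking Hypothesis \eqref{CTBK3E002}, after shrinking the base, I have a smooth family $\pi:\mathcal{E}\to B$ with $\dim B=1$ of smooth genus $1$ curves, together with $F:\mathcal{E}\to X$ unramified on each fiber and whose moduli map $B\to\bcalM_1$ is generically finite onto a 1-dimensional image. The fibrewise unramification yields a surjection $F^*\Omega^1_X\twoheadrightarrow\Omega^1_{\mathcal{E}/B}$, giving a lift $\tilde F:\mathcal{E}\to Y$ with $\pi_X\circ\tilde F=F$ and $\tilde F^*L\cong\Omega^1_{\mathcal{E}/B}$. Since the fibers of $\pi$ have trivial cotangent bundle, $\Omega^1_{\mathcal{E}/B}\cong\pi^*\mathcal{L}$ for $\mathcal{L}:=\pi_*\Omega^1_{\mathcal{E}/B}$ the Hodge bundle on $B$, so $\tilde F^*s$ corresponds by the projection formula to a section $t\in\HH^0(B,\mathcal{L}^m)$. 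Moreover, $L\cdot\tilde F(E_b)=\deg\Omega^1_{E_b}=0$, forcing the dichotomy: either $\tilde F(E_b)\subset\div(s)$ (equivalently $t(b)=0$) or $\tilde F(E_b)\cap\div(s)=\emptyset$.

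The crux is to establish $t=0$, thereby forcing $Z:=\tilde F(\mathcal{E})\subset\div(s)$. Granting this, the 2-dimensional $Z$ must dominate $X$ via $\pi_X$ (else the non-isotrivial family of elliptic curves would collapse to a proper curve in $X$, contradicting maximal moduli), so $Z$ is a multisection of degree $d:=\deg F\ge 1$. Over a generic $x\in X$, the $d$ points of $Z\cap\pi_X^{-1}(x)$ are all roots of the degree-$m$ polynomial $s(x)$, so either $d>m$ (whence $s\equiv 0$) or $d\le m$. In the latter case, $Z$ is a divisor on $Y$ of class $dL+\pi_X^*A$ for some $A\in\Pic(X)$, the section $s$ is divisible by a section cutting out $Z$, and this yields some $s'\in\HH^0(X,\Sym^{m-d}\Omega^1_X\otimes\OO_X(-A))$. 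An induction on $m$, with base case $m=1$ supplied by Proposition \ref{prop:unramifiedno1forms} ($\HH^0(X,\Omega^1_X)=0$), then yields $s'=0$ and hence $s=0$.

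The principal obstacle is proving $t=0$. The strategy should exploit the non-isotriviality of the family: the Kodaira--Spencer map is generically nonzero, inducing (via its dual) a generic injection $\mathcal{L}^{2}\hookrightarrow\Omega^1_B$, so that sections of $\mathcal{L}^m$ embed rationally into symmetric differentials on $B$ whose global structure, on a compactification $\bar B$ and combined with the K3 hypothesis on $X$, should force $t=0$. Making this rigorous is the technical heart of the argument.
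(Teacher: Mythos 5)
You have correctly identified the right intermediate target (the lift of the family into $Y=\PP(T_X)$ does lie inside the zero divisor of $s$), but the step "$t=0$" that you defer is the entire content of the theorem, and the mechanism you sketch for it cannot work. For a non-isotrivial family of genus $1$ curves the Hodge bundle $\mathcal{L}=\pi_*\Omega^1_{\mathcal{E}/B}$ has \emph{positive} degree on a compactification $\bar B$ (e.g.\ $12\deg\mathcal{L}$ is the degree of the discriminant), so $\HH^0(\bar B,\mathcal{L}^{m})\neq 0$ for $m\gg0$; the generic injection $\mathcal{L}^{2}\hookrightarrow\Omega^1_B$ coming from Kodaira--Spencer only bounds $\deg\mathcal{L}$ from above and yields no vanishing, and on the open base $B$ every line bundle has sections anyway. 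Nothing intrinsic to $\mathcal{E}\to B$ forces $t=0$: one must use how the curves sit inside $X$. This is exactly what the paper does. For a general member $E=C^\nu$ the conormal sequence $0\to{\mathscr N}_\nu^\vee\to\nu^*\Omega^1_X\to\Omega^1_E\to0$ is an extension of $\OO_E$ by $\OO_E$ which is non-split precisely because the family is non-isotrivial (a splitting kills Kodaira--Spencer), whence $h^0(E,\Sym^n\nu^*\Omega^1_X)=1$ for all $n$, with the unique divisor in $|\rho^*L|$ on $R=E\times_XY$ being the canonical lift $\Gamma$; combined with a component-by-component analysis of $G=\sum b_iD_i$, $D_i\in|a_iL+\pi^*F_i|$, using $\sum b_iF_i=0$ to show $\OO_E(\nu^*F_i)=\OO_E$ whenever $a_i>0$ and $CF_i\le0$, and a transversality argument forcing $a_i\le1$, this pins down $\divisor(s)|_{\pi^{-1}(C)}$. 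None of this is present in your proposal.

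There are also two problems in the part you "grant". First, after dividing $s$ by an equation of $Z\in|dL+\pi_X^*A|$ you land in $\HH^0(X,\Sym^{m-d}\Omega^1_X\otimes\OO_X(-A))$, which is not covered by an induction on $\HH^0(X,\Sym^{k}\Omega^1_X)$ unless you control $A$; handling these twists is precisely the role of the divisors $F_i$ in the paper's argument and cannot be skipped. Second, your base case: Proposition \ref{prop:unramifiedno1forms} requires an unramified dominating family of curves of genus $g>1$ with maximal moduli, which Hypothesis \ref{CTBK3E002} does not provide (it gives genus $1$); the paper instead invokes Theorem \ref{thm:no 1-forms} (Hodge theory, resp.\ Rudakov--Shafarevich) for $m=1$. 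Your overall skeleton --- lift the family, show the section contains the lift, divide and induct --- is a plausible reorganisation, but as written the decisive step is missing and the route you propose to supply it is a dead end.
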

\begin{proof}
We maintain the notation for $Y,L$ from the beginning of this section. Suppose for a contradiction that $L$ is
$\QQ$-effective. Let $m$ be the smallest positive integer such that $mL$ is effective and let $G\in |mL|$. We write
\[
G = \sum b_i D_i
\]
where $D_i \in |a_i L + \pi^* F_i|$ are the irreducible components of $G$ for some $a_i \in \NN$ and some divisors
$F_i\in \Pic(X)$ and $b_i\in \ZZ_+$ is the multiplicity of $D_i$ in $G$. Since $mL=\sum a_i b_i L + \sum b_i
\pi^*F_i$, we obtain that 
\begin{align}\label{eq:sum=0}
\sum b_i F_i=0 \text{ in } \Pic(X).
\end{align}

Let $C\subset X$ be an integral curve of geometric genus 1 as given by Hypothesis \ref{CTBK3E002}. 
From the assumption, there exists an irreducible curve $B\subset |C|$ with $C$ as member and such that every curve
$\Gamma\in B$ is of geometric genus 1.

When $a_i = 0$, $F_i$ is necessarily effective and $CF_i \ge 0$. Note also that there exists at least one
$i$ such that $CF_i\le 0$ and $a_i > 0$ since otherwise, $CF_i > 0$ for all $a_i > 0$ and so $\sum CF_i > 0$, contradicting
\eqref{eq:sum=0}.

From now on we denote by $a=a_i$, $D = D_i$ and $F = F_i$ so that $a_i > 0$ and $CF_i \le 0$.

From the assumption, the general deformation of the normalisation of $C$ is an immersion. We
henceforth replace $C$ by a general member of $B$ and let $\nu: E = C^\nu\to X$ be its normalisation, i.e., we have that
$\nu^* \Omega^1_X \to \Omega^1_E$ is a surjection. As the kernel is torsion-free on a smooth curve, it is a line bundle,
and by taking determinants we see that it must be isomorphic to $(\Omega^1_E)^\vee\cong\OO_E$. This leads to the exact sequence
\begin{equation}\label{CTBK3E000}
\begin{tikzcd}
0 \ar{r} & {\mathscr N}_\nu^\vee \ar{r}\ar[equal]{d} & \nu^* \Omega^1_X \ar{r} & \Omega^1_E \ar{r} \ar{r}\ar[equal]{d} & 0\\
& \OO_E && \OO_E
\end{tikzcd}
\end{equation}
where ${\mathscr N}_\nu$ is the normal bundle of $\nu$. From our assumption and the following lemma, the above sequence
does not split.

\begin{Lemma}
Sequence
\eqref{CTBK3E000} splits if and only if $B$ parametrises an isotrivial family of elliptic curves.
\end{Lemma}
\begin{proof} 
If $f:\mathcal{C}\to B$ the family with $B$ a smooth projective curve and $E$ the generic fibre of $f$,
then a section $\Omega^1_E\to\nu^*\Omega^1_X$ also induces a splitting of 
\[\begin{tikzcd} 0\ar{r}& f^*\Omega^1_B|_U\ar{r}& \Omega^1_{\mathcal{C}}|_U\ar{r}& \Omega^1_f|_U\ar{r}& 0\end{tikzcd}\] 
on some open subset $U\subset B$. Dualising this sequence and pushing forward to $U$ we get a split sequence whose first
coboundary map in cohomology is the Kodaira--Spencer map. Hence this map is necessarily zero so the family over $U$ is isotrivial.
\end{proof}

Since $aL+F$ is effective, $\HH^0(S^a \Omega^1_X \otimes \OO_X(F)) \ne 0$ and as $C$ is a general member of a covering
family of curves on $X$, we see that
\[
\HH^0(E, S^a \nu^* \Omega^1_X \otimes \OO_E(\nu^* F)) \ne 0
\]
as otherwise a global section of $S^a \Omega^1_X \otimes \OO_X(F)$ would vanish everywhere.
By \eqref{CTBK3E000}, $S^a \nu^* \Omega^1_X \otimes \OO_E(\nu^* F)$ has a filtration
\[
0\subsetneq E_1\subsetneq E_2\subsetneq \cdots \subsetneq E_{a+1}:=S^a \nu^* \Omega^1_X \otimes \OO_E(\nu^* F)
\]
with graded pieces all isomorphic to $\OO_E(\nu^* F)$. If the global section $\OO_E\to E_{a+1}$ from above vanishes when mapped to
$E_{a+1}/E_a=\OO_E(\nu^* F)$, then it must induce a non-zero global section of $E_a$. By induction, one of the
quotients $E_i/E_{i-1}$ must have a non-zero global section and hence $\HH^0(\OO_E(\nu^* F)) \ne 0$. On the other hand,
$CF \le 0$ and $\deg \nu^* F \le 0$. So we necessarily have $\OO_E(\nu^* F) = \OO_E$.

This proves that for all $i$ satisfying $a_i > 0$ and $C F_i \le 0$ we have $\OO_E (\nu^* F_i) = \OO_E$ and hence $CF_i = 0$.
For the remaining $i$, we clearly have $CF_i \ge 0$. Therefore, we conclude that $C F_i = 0$ for all $i$ from
\eqref{eq:sum=0}. In summary, we have
\begin{itemize}
\item if $a_i > 0$, $\OO_E (\nu^* F_i) = \OO_E$;
\item if $a_i = 0$, $F_i$ is effective and $CF_i = 0$.
\end{itemize}

As exact sequence \eqref{CTBK3E000} does not split,
\begin{align}\label{eq:nosections}
h^0(E, S^n \nu^* \Omega^1_X) = 1
\end{align}
for all $n\in \ZZ_+$.

Work now again with a fixed $i$ so that $a_i>0$ as above, keeping the notation $D,F,a$. Since $D$ is reduced, $Y_p =
\pi^{-1}(p)$ meets $D$ transversely for $p\in X$ general and as $C$ is a general member of a covering family of curves
on $X$, also $Y_p = \pi^{-1}(p)$ meets $D$ transversely for $p\in C$ general.  Let now $R = E\times_X Y \cong
\Proj(S^\bullet (\nu^* \Omega^1_X))$ with diagram
\[
\begin{tikzcd}
R \ar{r}{\rho} \ar{d} & Y\ar{d}{\pi}\\
E \ar{r}{\nu} & X.
\end{tikzcd}
\]
Since $Y_p$ and $D$ meet transversely for $p\in C$ general, $R_q$ and $\rho^* D$ meet transversely for $q\in E$ general,
where $R_q$ is the fibre of $R$ over $q$.

Note that $\rho^* D$ is a section of $a \rho^* L$. From \eqref{eq:nosections}, $h^0(R, n \rho^*
L) = 1$ for all $n\geq0$ and so we must have $\rho^* D = a \Gamma$, where $\Gamma$ is the unique section of $\rho^* L$. Then we
must have $a = 1$ because $R_q$ and $\rho^* D$ meet transversely for $q\in E$ general.

Hence we have concluded that $a_i = 0$ or $1$ for all $i$. If there are two distinct components $D_i$ and $D_j$ of $G$
such that $a_i = a_j = 1$, then $\rho^* D_i = \rho^* D_j = \Gamma$. Therefore,
\[D_i \cap \pi^{-1}(C) = D_j\cap \pi^{-1}(C)\]
for $C\in B$ general and hence $D_i = D_j$. Consequently, $G$ has only one component $D_i$ with $a_i = 1$ and so we have
$\HH^0(\Omega^1_X \otimes \OO_X(F)) \ne 0$ for some $F\in \Pic(X)$ such that $-F=\sum F_i$ is effective. As
$\HH^0(\Omega^1_X \otimes \OO_X(F))\subset \HH^0(\Omega^1_X)$ we obtain a contradiction from the case $m=1$, namely
Theorem \ref{thm:no 1-forms}.
\end{proof}

In conclusion, we have proved that $\Omega^1_X$ is not $\QQ$-effective if Hypothesis \eqref{CTBK3E002} holds. This of
course is a consequence of Theorem A in characteristic zero, but in the following remark we outline various cases where
this is true in characteristic zero under far weaker assumptions than the existence of infinitely many rational curves
on $X$.

\begin{Remark}\label{rem:cases}
\begin{enumerate}
\item Recall that from Propositions \ref{prop:maxmod1}, \ref{prop:maxmod2}, the existence of either one rational
curve $C\subset X$ with a locally reducible singularity, or two distinct rational curves meeting in at least two
distinct points guarantee the existence of a non-isotrivial family of genus 1 curves in $X$.

\item More generally, we can produce a non-isotrivial family of genus 1 curves on $X$ if there are distinct rational
curves $C_1,\ldots,C_n \subset X$ and points $p_i\ne q_i\in C_i^\nu$ on their normalisations such that for all $1\leq i<n$
\[
\nu(p_{i}) = \nu(q_{i+1}) \text{ and } \nu(p_n) = \nu(q_1)
\]
where $\nu: \sqcup C_i^\nu\to X$ is the normalisation of $\cup C_i$. In this case, we can find a
stable map $f: \Gamma\to X$ such that $\Gamma = \cup \Gamma_i$, $\Gamma_i \cong C_i^\nu$, $f(\Gamma_i) = C_i$,
\[
|\Gamma_1\cap \Gamma_2| = \ldots = |\Gamma_n \cap \Gamma_1| = 1 \text{ and } \Gamma_i \cap \Gamma_j = \emptyset
\text{ otherwise}.
\]
\end{enumerate}
\end{Remark}

In positive characteristic, even though there exist rational curves which deform too much and without unramified
deformations (e.g., a quasi-elliptic fibration on a supersingular K3 surface),
a version of the Arbarello--Cornalba Lemma (Proposition \ref{prop:ac}) eludes us for the time being. One could ask the
following.
\begin{Question}\label{q:ac}
    Let $f:C\to X$ be a morphism from a smooth projective curve of genus $g\geq1$ to a K3 surface over an algebraically
    closed field. If $f$ deforms in the expected dimension, is a general deformation of $f$ unramified?
\end{Question}
Assuming the above and that all rational curves in Remark \ref{rem:cases} are rigid, Propositions \ref{prop:maxmod1},
\ref{prop:maxmod2} imply that the cases listed in Remark \ref{rem:cases} also provide a genus 1 curve satisfying the
properties of Hypothesis \ref{CTBK3E002}, and hence Kobayashi's Theorem holds.

\section{Global 1-forms and stability}\label{sec:stability}

As mentioned in the introduction and in the previous section (see Theorem \ref{thm:no 1-forms}), the proof that a K3
surface does not have any global 1-forms uses analytic techniques in characteristic zero (Hodge theory) and is rather
non-trivial in positive characteristic. In this section we gather some auxiliary results and questions, giving simple,
conditional algebraic proofs of the fact that for a K3 surface $X$ we have that $\HH^0(X, \Omega^1_X)=0$ and that
$\Omega^1_X$ is slope-stable (with respect to any ample divisor), using only the existence of special curves in $X$. 

\begin{Proposition}\label{prop:unramifiedno1forms}
    Let $X$ be a smooth projective variety of dimension $n$ over an algebraically closed field and $f:C\to X$ an
    unramified morphism from a smooth curve of genus $g > 1$ so that $f$ deforms in a family which dominates $X$ and
    varies with maximal moduli. Then $\HH^0(X, T_X)=0$.
\end{Proposition}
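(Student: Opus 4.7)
The plan is to argue by contradiction: suppose $v \in \HH^0(X, T_X)$ is nonzero and derive $v \equiv 0$. The key observation is that $v$ induces, at every smooth point $[f] \in M$, an infinitesimal deformation of $f$ lying in the kernel of the differential of the moduli map $\phi : M \to \bcalM_g$; maximal moduli will force this kernel to be trivial; hence $f^*v = 0$, meaning $v$ vanishes along $f(C)$; and dominance of the family will then force $v \equiv 0$ on $X$.

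Concretely, let $M$ be an irreducible component of $\bcalM_g(X, f_*[C])$ containing $[f]$, with universal curve $\pi : \mathcal{C} \to M$ and evaluation $e : \mathcal{C} \to X$. Since $f$ is unramified, one has the short exact sequence on $C$
\[
0 \to T_C \to f^*T_X \to N_f \to 0,
\]
and since $g > 1$, $\HH^0(T_C) = 0$, so $\HH^0(f^*T_X) \hookrightarrow \HH^0(N_f)$. Standard deformation theory identifies the Zariski tangent space $T_{[f]}M$ with $\HH^0(N_f)$, and the differential $d\phi$ with the connecting homomorphism $\delta : \HH^0(N_f) \to \HH^1(T_C)$ of the above sequence, whose kernel is precisely the image of $\HH^0(f^*T_X)$.

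The next step is to show $\HH^0(f^*T_X) = 0$ at a general smooth $[f] \in M$. The maximal moduli hypothesis, combined with $\dim M = g$ (which for K3 surfaces in characteristic zero, as in the intended application, is automatic from Proposition \ref{prop:ac}), implies that $\phi$ is generically finite onto its $g$-dimensional image. Therefore at a general smooth $[f]$ one has $\dim T_{[f]}M = g$ and $d\phi$ of rank $g$, so $\delta$ is injective and $\ker \delta = 0$. This forces $\HH^0(f^*T_X) = 0$. Any $v \in \HH^0(X, T_X)$ then satisfies $f^*v = 0$, i.e., $v$ vanishes along $f(C)$. Since $e$ is dominant, as $[f]$ varies over a dense open of $M$ the curves $f(C)$ cover a Zariski-dense subset of $X$, so $v$ vanishes on a dense subset and, being a section of a locally free sheaf, must vanish identically.

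The main obstacle is the delicate interaction of the two hypotheses: one needs the full expected-dimension condition $\dim M = g$, and not merely $\dim \phi(M) \geq g$, to turn maximal moduli into generic finiteness of $\phi$. Without this, $\ker \delta$ could be positive-dimensional and the argument would leave room for $\HH^0(X,T_X) \neq 0$ (as happens for, e.g., smooth plane quartics on $\PP^2$, where $\dim M > g$). In the K3 setting where this proposition is applied, the expected dimension is automatic; the other essential ingredient is the standard identification of $d\phi$ with the connecting homomorphism $\delta$ in Kontsevich moduli, which is valid because $f$ is unramified and $\operatorname{Aut}(C)$ is finite for $g > 1$.
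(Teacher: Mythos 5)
Your argument is essentially identical to the paper's: the same sequence $0\to T_C\to f^*T_X\to N_f\to 0$, the vanishing of $\HH^0(T_C)$ for $g>1$, the identification of the differential of the moduli map with the connecting homomorphism, injectivity of that map forced by the moduli hypothesis, and dominance of the family to conclude. Your caveat that one needs $\dim M=g$ (equivalently $h^0(N_f)=g$, automatic on a K3 since $N_f\cong\omega_C$) rather than merely $\dim\phi(M)\ge g$ is correct and is glossed over in the paper's one-line proof: smooth plane quartics on $\PP^2$ satisfy every stated hypothesis verbatim, including maximal moduli in the sense of Definition \ref{def:maxmod}, yet $\HH^0(\PP^2,T_{\PP^2})\ne 0$, so the extra condition you supply (which holds in the intended K3 application by Proposition \ref{prop:ac}) is genuinely needed and not merely a stylistic refinement.
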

\begin{proof}
    Taking cohomology of the sequence
    \[\begin{tikzcd} 0\ar{r}& T_C\ar{r}& f^*T_X\ar{r}& N_f\ar{r}& 0,\end{tikzcd}\]
    the Kodaira--Spencer map $\HH^0(C,N_f)\to \HH^1(C, T_C)$ must be injective, as it is the induced differential to the
    moduli map and $C$ deforms with maximal moduli. This implies that $\HH^0(C, f^*T_X)=0$, but as $C$ deforms to cover
    $X$, we obtain the result.
\end{proof}

In the case of K3 surfaces the existence of such curves in characteristic zero is guaranteed by Theorem A, but the
current proof relies on the existence of infinitely many rational curves, whose proof in fact uses the vanishing of 1-forms in a number of ways.
The assumptions of the above do hold unconditionally for K3 surfaces in the cases listed in Remark \ref{rem:cases}.

We move now to the question of stability of the (co)tangent bundle. We recall that for an ample divisor $A\in\Pic(X)$ on
a projective variety $X$ we say that a vector bundle $E$ on $X$ is \textit{$\mu_A$-(semi)stable} (often just $\mu$) if
\[
\mu_A(F):=\frac{\det(F)A^{\dim X-1}}{\operatorname{rk}(F)} {< \atop (\leq)} \frac{\det(E)A^{\dim X-1}}{\operatorname{rk(E)}} 
\]
for all torsion-free subsheaves $F\subsetneq E$. In fact if $F$ does not satisfy the above inequality then we say that
$F$ \textit{destabilises $E$}, and we may assume that $F$ is a
sub-vector bundle with torsion-free quotient. In particular if for a K3 surface $X$, $E=\Omega^1_X$ is not
semistable, then there exists a destabilising line bundle $L\subset\Omega^1_X$, i.e., $LA\geq0$.

The assumption we will be making to give a quick proof of stability of the tangent bundle will be the following.

\begin{Question}\label{q:nef cone}
    Let $X$ be a K3 surface over an algebraically closed field. Is it true that for any ample divisor $D\in\Pic(X)_\QQ$
    there exist integral curves $E_1,\ldots, E_n\subset X$ of geometric genus 1 so that $D=\sum_{i=1}^n a_i E_i$ for
    $a_i\in\QQ_{\geq0}$? 
\end{Question}

\begin{Remark}
    We note that the above is known to be true in the following cases
    \begin{enumerate}
        \item The Picard rank of $X$ is $\leq2$ \cite[Corollary 7.3, Theorem 8.4]{regenerationinfinite},
        \item $X$ contains no smooth rational curves: in many such cases the effective cone is generated by smooth genus
        1 curves even (see \cite{kovacs}). For the rest (in particular the case where the cone is not polyhedral) one can use
        the fact that every nef divisor can be written as a sum of minimal nef divisors and that each such divisor
        is linearly equivalent to an integral curve of geometric genus 1 (see \cite[\S 3]{regenerationinfinite} for the
        definition and for this result).
    \end{enumerate}
\end{Remark}

We claim that the stability of $\Omega_X^1$ follows from a positive answer to Question \ref{q:nef cone} for K3 surfaces
$X$. In fact, we can prove a more general statement. For that purpose, let us recall some basic facts about
Harder--Narasimhan filtrations and the cone of curves.

Let $E$ be a vector bundle on a smooth projective variety $X$. We use the notation $\mu_{A,\max}(E)$ to denote that the
maximum of the slopes $\mu_A(F)$ for all subsheaves $F\subset E$ and some ample $A$, which we from now on suppress in
the notation. This number is given by the Harder--Narasimhan filtration
$$
E = E_0 \supsetneq E_1 \supsetneq ... \supsetneq E_m \supsetneq E_{m+1} = 0
$$
of $E$, where $F_i = E_i/E_{i+1}$ are torsion-free and semistable
sheaves satisfying 
$$
\mu(F_0) < \mu(F_1) < ... < \mu(F_m)
$$
and $\mu_{\max}(E)$ is given by $\mu(F_m) = \mu(E_m)$.
Using Harder--Narasimhan filtrations, we have
$$
r \mu_{\max}(E) \ge \mu_{\max}(\wedge^r E)
$$
for all $1\le r\le \rank(E)$.

For a smooth projective variety $X$, we let $N_1(X)$ denote the group of $1$-cycles modulo numerical equivalence and let
$N_1(X)_\QQ$ and $N_1(X)_\RR$ denote $N_1(X)\otimes \QQ$ and $N_1(X)\otimes \RR$, respectively. For $X$ over $\CC$,
we have
$$
N_1(X)_\QQ \cong H^{n-1,n-1}(X,\QQ) = H^{n-1,n-1}(X)\cap H^{2n-2}(X,\QQ).
$$
For lack of a better term, we call the classes $A_1A_2\ldots A_{n-1}\in N_1(X)$ for ample $A_1,A_2,\ldots,A_{n-1}\in \Pic(X)$
{\em ample complete intersection classes}. We call the cone $\Amp_1(X)_\RR\subset N_1(X)_\RR$ generated by these classes
the {\em cone of ample complete intersection curves}.

\begin{Theorem}\label{thm:stab}
Let $X$ be a smoooth projective variety of dimension $n$ over an algebraically closed field of characteristic $0$ and
let $G\subset N_1(X)_\RR$ be the set consisting of numerical classes $\xi$ with the following property: there exists a sequence $f_m: C_m\to X$ of morphisms from smooth projective curves $C_m$ to $X$ such that
\begin{itemize}
\item $f_m(C_m)$ passes through a general point of $X$, i.e., the deformation of $f_m$ dominates $X$ for each $m$,
\item the numerical classes
$[(f_m)_* C_m]$ of $(f_m)_* C_m$ satisfy
$$
\lim_{m\to\infty} \frac{[(f_m)_* C_m]}{\deg (f_m)_* C_m} = \xi
$$
\item and the conormal bundles
$$
M_{f_m} = \ker(f_m^*\Omega^1_X \xrightarrow{} \Omega^1_{C_m})
$$
of $f_m$ satisfy
$$
\varlimsup_{m\to\infty} \frac{n\max (\mu_{\max}(M_{f_m}), \deg K_{C_m}) - \deg f_m^* K_X}{n \deg (f_m)_* C_m} \le 0
$$
\end{itemize}
where $\deg (f_m)_* C_m$ is the degree of $(f_m)_* C_m$ with respect to a fixed ample line bundle on $X$.

If $\Amp_1(X)_\RR$ is asymptotically generated by $G$, i.e., $\Amp_1(X)_\RR$ is contained in the closure of the cone generated by $G$, then
$\Omega^1_X$ is $\mu$-semistable for all ample divisors $A$ on $X$. More precisely, 
if $\Omega^1_X$ contains a locally free subsheaf $E$ of rank $r$ such that $\mu(E) \ge \mu(\Omega^1_X)$, then $n c_1(E) - r K_X$ is numerically trivial.

In particular, if $X$ is a complex K3 surface, $A$ is an ample divisor on $X$ and there is a positive answer to Question
\ref{q:nef cone}, then $\Omega^1_X$ is $\mu_A$-stable.
\end{Theorem}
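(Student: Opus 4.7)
The plan is to contradict any putative destabilising inclusion $E \subset \Omega^1_X$ by testing its slope against the family of morphisms $f_m \colon C_m \to X$ supplied by $G$, deriving a numerical pairing inequality, and then promoting it to numerical triviality via the Hodge index theorem. The K3 corollary will then follow by invoking the vanishing of global $1$-forms on $X$.

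First I would form the inclusion $\det E \hookrightarrow \wedge^r \Omega^1_X$, which is a subbundle away from a codimension-two locus. Since each $f_m(C_m)$ passes through a general point, the pullback $f_m^* \det E \hookrightarrow \wedge^r f_m^* \Omega^1_X$ is a generic injection of a line bundle into a vector bundle on the curve $C_m$, and so
\[
c_1(E) \cdot [(f_m)_* C_m] \;=\; \deg f_m^* \det E \;\leq\; \mu_{\max}\bigl(\wedge^r f_m^* \Omega^1_X\bigr) \;\leq\; r\, \mu_{\max}(f_m^* \Omega^1_X),
\]
by the Harder--Narasimhan estimate recalled just before the theorem. The conormal exact sequence $0 \to M_{f_m} \to f_m^* \Omega^1_X \to Q \to 0$, with $Q \subset \Omega^1_{C_m}$ the image, yields $\mu_{\max}(f_m^* \Omega^1_X) \leq \max(\mu_{\max}(M_{f_m}), \deg K_{C_m})$. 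Rearranging as
\[
\frac{(nc_1(E) - rK_X) \cdot [(f_m)_* C_m]}{n \deg (f_m)_* C_m} \;\leq\; \frac{r \bigl(n \max(\mu_{\max}(M_{f_m}), \deg K_{C_m}) - \deg f_m^* K_X\bigr)}{n \deg (f_m)_* C_m},
\]
taking $\limsup$ as $m \to \infty$ and invoking the hypothesis on the right-hand side, I would obtain $(nc_1(E) - rK_X) \cdot \xi \leq 0$ for every $\xi \in G$. Positive linear combinations and continuity then extend this to the closed cone generated by $G$, which by assumption contains $\Amp_1(X)_\RR$, and in particular the class $A^{n-1}$.

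Writing $D = nc_1(E) - rK_X$, the semistability hypothesis $\mu(E) \geq \mu(\Omega^1_X)$ gives the reverse inequality at $A^{n-1}$, forcing $D \cdot A^{n-1} = 0$. To upgrade this to $D \equiv 0$, I would use a two-sided perturbation: for any divisor class $A'$, $A + tA'$ remains ample for $t$ in a two-sided neighbourhood of $0$ by openness of the ample cone, so the polynomial $\phi(t) = D \cdot (A + tA')^{n-1}$ is $\leq 0$ there and vanishes at $t = 0$, forcing $\phi'(0) = 0$. This yields $D \cdot A^{n-2} \cdot A' = 0$ for every $A'$; specialising to $A' = D$ gives $D^2 \cdot A^{n-2} = 0$, and the Hodge index theorem (on $N^1(X)_\RR$ with polarisation $A^{n-2}$) forces $D \equiv 0$. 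For the K3 corollary, $n = 2$ and $K_X = 0$; Question \ref{q:nef cone} supplies integral genus-$1$ curves whose deformation families have unramified general members by Proposition \ref{prop:ac}, so $\deg K_{C_m} = 0$ and $\deg M_{f_m} = 0$, and such classes generate the ample cone $\Amp_1(X)_\RR$ of the surface, so the hypothesis is met. The main theorem then forces $c_1(L) \equiv 0$ for any destabilising line bundle $L \subset \Omega^1_X$, so $L \cong \OO_X$ since $\Pic^0(X) = 0$, producing a nonzero global $1$-form and contradicting Theorem \ref{thm:no 1-forms}. The delicate step is the passage from the pairing inequality on the closed cone of $G$ to numerical triviality of $D$, which requires both the two-sided perturbation at an interior ample class and the Hodge index theorem; the rest of the argument is essentially bookkeeping.
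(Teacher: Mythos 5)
Your proposal is correct and follows the paper's overall strategy --- test $\det E$ against the curves supplied by $G$, deduce $(nc_1(E)-rK_X)\cdot\xi\le 0$ on the closed cone generated by $G$, play this off against $(nc_1(E)-rK_X)\cdot A^{n-1}\ge 0$, and reduce the K3 case to Theorem \ref{thm:no 1-forms} --- but it diverges in two local steps. First, where the paper bounds $\deg f_m^*\det E$ by feeding a nonzero section of $f_m^*\Omega_X^r(-\det E)$ into the exact sequence $0\to M_{f_m}^r\to f_m^*\Omega_X^r\to M_{f_m}^{r-1}\otimes K_{C_m}$ and using that $H^0(V(-B))=0$ once $\deg B>\mu_{\max}(V)$, you bound it via $\mu_{\max}(\wedge^r f_m^*\Omega^1_X)\le r\,\mu_{\max}(f_m^*\Omega^1_X)$ together with the sub/quotient estimate for $\mu_{\max}$ in the conormal sequence; both routes land on the same inequality $\deg f_m^*\det E\le r\max(\mu_{\max}(M_{f_m}),\deg K_{C_m})$, and yours is a legitimate shortcut using the characteristic-zero fact $r\mu_{\max}(V)\ge\mu_{\max}(\wedge^r V)$ that the paper records just before the theorem. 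Second, the endgame differs genuinely: the paper fixes $\xi\in G$, writes $A^{n-1}-t\xi$ as a positive combination of elements of $G$ (using openness of $\Amp_1(X)_\RR$ in $N_1(X)_\RR$), forces $(nc_1(E)-rK_X)\cdot\xi=0$ for every $\xi\in G$ term by term, and concludes because $G$ spans $N_1(X)_\RR$; you instead perturb the polarisation to get $(nc_1(E)-rK_X)\cdot A^{n-2}\cdot A'=0$ for all $A'$ and invoke the Hodge index theorem. Your route requires the Hodge--Riemann nondegeneracy of the form $(D_1,D_2)\mapsto D_1D_2A^{n-2}$ but only the standard openness of the ample cone in $N^1(X)_\RR$, whereas the paper's route leans on the openness of the complete-intersection cone in $N_1(X)_\RR$, which for $n>2$ is the less elementary input; for the K3 application ($n=2$, $K_X=0$) the two are equivalent and both collapse to the classical Hodge index theorem. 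The remaining ingredients --- domination through a general point to ensure the pulled-back section is nonzero, membership of the genus-one classes in $G$ via Proposition \ref{prop:ac}, and the final contradiction with Theorem \ref{thm:no 1-forms} --- match the paper.
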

\begin{proof}
Suppose that there exists a locally free subsheaf $E\subset \Omega^1_X$ of rank $r$ such that $\mu(E) \ge \mu(\Omega^1_X)$. Then
$L = \wedge^r E$ is a subsheaf of $\Omega_X^r$ and hence
$H^0(\Omega_X^r(-L)) \ne 0$.

Let $\xi\in G$ and $f_m: C_m\to X$ be the sequence of morphisms associated to $\xi$. Since $f_m(C_m)$ passes through a general point of $X$, we see that
$$
H^0(C_m, f_m^* \Omega_X^r(-L)) \ne 0.
$$
Then we have
$$
h^0(M_{f_m}^r (-f_m^* L) ) + h^0(M_{f_m}^{r-1}(-f_m^* L)
\otimes K_{C_m}) \ge 
h^0(f_m^* \Omega_X^r(-L)) > 0
$$
by the left exact sequence
$$
\begin{tikzcd}
0 \ar{r} & M_{f_m}^r \ar{r} & f_m^*\Omega_X^r \ar{r} & M_{f_m}^{r-1}
\otimes K_{C_m}
\end{tikzcd}  
$$
where $M_{f_m}^a = \wedge^a M_{f_m}$. On the other hand, we know that
$$
H^0(V(-B)) = 0 \text{ if } \deg B > \mu_{max}(V)
$$
for a vector bundle $V$ and a divisor $B$ on a smooth projective curve. It follows that
$$
\begin{aligned}
L . (f_m)_* C_m = \deg f_m^* L &\le \max\big(\mu_{max}(M_{f_m}^r),
\mu_{\max}(M_{f_m}^{r-1}) + \deg K_{C_m}\big)
\\
&\le \max\big(r \mu_{\max}(M_{f_m}),
(r-1)\mu_{\max}(M_{f_m}) + \deg K_{C_m}\big)\\
&\le r \max(\mu_{\max}(M_{f_m}), \deg K_{C_m}).
\end{aligned}
$$
Therefore,
$$
\left(\frac{L}r - \frac{K_X}{n}\right)
\frac{(f_m)_* C_m}{\deg (f_m)_* C_m} \le \frac{n\max (\mu_{\max}(M_{f_m}), \deg K_{C_m}) - \deg f_m^* K_X}{n \deg (f_m)_* C_m}.
$$
By our definition of $G$, we conclude that
$$
\left(\frac{L}r - \frac{K_X}{n}\right) \xi \le 0
$$
for all $\xi\in G$. On the other hand, since $\mu(E)\ge \mu(\Omega^1_X)$,
$$
\left(\frac{L}r - \frac{K_X}{n}\right) A^{n-1} \ge 0.
$$

Fixing $\xi\in G$,
since $\Amp_1(X)_\RR$ is open in $N_1(X)_\RR$,
$$
A^{n-1} - t\xi\in \Amp_1(X)_\RR
$$
for some $t>0$ sufficiently small. Since $\Amp_1(X)_\RR$ is asymptotically generated by $G$,
$$
A^{n-1} - t\xi = \sum_{m=1}^\infty t_m \xi_m
$$
for some $t_m > 0$ and $\xi_m\in G$. Finally, from
$$
(nL - rK_X)A^{n-1}\ge 0,\ (nL - rK_X)\xi \le 0
\text{ and }
(nL - rK_X)\xi_m \le 0,
$$
we conclude that $(nL - rK_X)\xi = 0$. Therefore,
$$
(nL - rK_X)\xi = 0
$$
for all $\xi\in G$. This implies that $nL - rK_X$ is numerically trivial since $G$ also generates $N_1(X)_\RR$.

For a complex K3 surface $X$, it is easy to see that $E/\deg E\in G$ for every elliptic curve $E$ on $X$. Since by
hypothesis the elliptic curves generate the ample cone $\Amp(X)$ of $X$, $\Amp(X)$ is generated by $G$. If $\Omega_X^1$
is destabilised by a line bundle $L$, then $L$ is numerically trivial. For K3 surfaces, this implies that $L = \OO_X$,
so that $H^0(\Omega_X^1(-L)) = H^0(\Omega_X^1) \ne 0$ which is a contradiction.
\end{proof}

\begin{Remark}
In positive characteristic, Langer \cite[\S 4]{langer15} proves that K3 surfaces not dominated by $\PP^2$ have strongly
semistable cotangent bundle, and that K3 surfaces admitting a quasi-elliptic
fibration (e.g., unirational K3 surfaces in characteristic 2) do not have semistable cotangent bundle. If semistable,
then $\Omega^1_X$ must also be stable as $\HH^0(X, \Omega^1_X)=0$ is known for an arbitrary K3. We expect
Question \ref{q:nef cone} to still have a positive answer here though. In fact if one could furthermore assume that all
the genus $1$ curves generating the nef cone admit normalisations which deform to unramified morphisms (something which
does not occur for fibres of a quasi-elliptic fibrations), the above proof goes through. 
\end{Remark}

We conclude this section by giving the proof of Nakayama's Theorem in arbitrary characteristic. This proof is essentially
the same as in \cite[Theorem 7.8]{bdpp} (which draws from Nakayama's original proof from \cite{Nakayama}) with the necessary
adjustments for positive characteristic in place.

\begin{Theorem}[Nakayama in characteristic $p\geq0$]\label{thm:nakayama}
Let $X$ be a K3 surface over an algebraically closed field $k$. Assume further that $\Omega^1_X$ is $\mu$-stable and that
\[\HH^0(X, \Sym^n\Omega^1_X)=0\text{ for all }n>0.\]
Then $\Omega^1_X$ is not pseudoeffective.
\end{Theorem}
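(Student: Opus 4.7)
Following the outline of the BDPP and Nakayama proofs, the plan is to assume for contradiction that $\Omega^1_X$ is pseudoeffective and derive a contradiction. By definition, this is equivalent to $L := \OO_Y(1)$ being pseudoeffective on the $\PP^1$-bundle $Y = \PP(T_X) = \Proj(\Sym^\bullet \Omega^1_X) \xrightarrow{\pi} X$. The hypothesis $\HH^0(X, \Sym^m\Omega^1_X) = 0$ for $m > 0$ translates to $\HH^0(Y, mL) = 0$ for $m > 0$, so $L$ is pseudoeffective but not $\QQ$-effective, lying on the boundary of the pseudoeffective cone of $Y$ and, in particular, not big.

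I would then apply Nakayama's divisorial Zariski ($\sigma$-)decomposition to write $L \equiv P + N$ on the threefold $Y$, where $N = \sum_i c_i \Gamma_i$ is an effective $\RR$-divisor supported on finitely many prime divisors and $P$ is the $\sigma$-trivial positive part. Split each $\Gamma_i$ into \emph{vertical} ($\Gamma_i = \pi^*G_i$) or \emph{horizontal} (with numerical class $a_i[L] + \pi^*[F_i]$, $a_i \in \ZZ_{>0}$). A horizontal $\Gamma_i$ is cut out by a nonzero section of $\Sym^{a_i}\Omega^1_X \otimes \OO_X(F_i)$ and hence gives a saturated sub-line bundle $\OO_X(-F_i) \hookrightarrow \Sym^{a_i}\Omega^1_X$; the $\mu$-stability of $\Omega^1_X$ (so that $\Sym^{a_i}\Omega^1_X$ is semistable of slope zero) then forces $F_i$ to pair non-negatively with ample classes. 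Combined with the duality between pseudoeffective divisors and movable curves (BDPP, and Langer's extension in positive characteristic) and the computations $\pi_*L = [X]$ and $\pi_*(L^2) = c_1(\Omega^1_X) = 0$, intersecting against movable curves of the form $L \cdot \pi^*A$ eliminates the horizontal components of $N$ and further forces $N = \pi^*D$ with $D$ numerically trivial. Hence $L$ itself is $\sigma$-trivial, pseudoeffective, and not $\QQ$-effective.

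The final step is to use $\sigma$-triviality, pseudoeffectivity and $\mu$-stability to produce an actual section of some $\Sym^m\Omega^1_X$. I would restrict $L$ to the ruled surface $S_C = \pi^{-1}(C) = \PP(T_X|_C)$ over a sufficiently general ample complete intersection curve $C \subset X$: by Mehta--Ramanathan (Langer in char $p$), $T_X|_C$ is $\mu$-stable of degree zero, so $S_C$ is a \emph{balanced} ruled surface whose minimal sections have self-intersection zero. The numerical identities $L^2 \cdot \pi^*C = 0$ and $L|_{S_C} \cdot [\mathrm{fibre}] = 1$ identify $L|_{S_C}$ numerically with a minimal section of $S_C$, and the $\sigma$-triviality of $L$ promotes this to $\QQ$-linear equivalence with an actual section of $S_C \to C$. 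Letting $C$ vary in a covering family of complete intersection curves and taking closures assembles these sections into a $\QQ$-effective representative of $L$ on $Y$, contradicting $\HH^0(Y, mL) = 0$ for all $m > 0$.

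The principal obstacle is the positive-characteristic adaptation: symmetric powers of stable bundles need not be semistable and the classical restriction theorem can fail in char $p$, so the second and third paragraphs both require Langer's refinements (his strengthened Bogomolov--Gieseker inequality and his version of Mehta--Ramanathan). These are precisely the ``missing steps" acknowledged to Langer in the paper, and they allow the sketched argument to proceed uniformly in all characteristics under the stability and vanishing assumptions of the theorem.
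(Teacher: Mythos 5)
Your first two paragraphs track the paper's proof fairly closely: pass to $Y=\PP(T_X)$ with $L=\OO_Y(1)$, take the Nakayama--Zariski decomposition of $L$, and use a restriction theorem (Langer's in positive characteristic) to control $L$ on the ruled surfaces $\pi^{-1}(C)=\PP(T_X|_C)$ over general very ample curves. Where the paper kills the negative part directly --- on such a ruled surface the pseudoeffective and nef cones coincide and $L$ restricts to an extremal nef class, so both pieces of the decomposition restrict to multiples of $L|_{\pi^{-1}(C)}$, and the effective piece must then vanish because $L$ has no effective multiple --- you instead invoke the BDPP duality between pseudoeffective divisors and movable curves. That duality is not needed, and in positive characteristic it is not available off the shelf (Langer's contributions relevant here are the restriction theorem and Bogomolov-type inequalities, not the BDPP duality), so this detour should be replaced by the direct cone argument; up to this point, though, your plan is repairable.

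The endgame contains a genuine gap. If $T_X|_C$ is $\mu$-stable of degree zero on a curve of genus $\ge 2$, then every sub-line bundle of $T_X|_C$ has strictly negative degree, so every section of $\PP(T_X|_C)\to C$ has strictly positive self-intersection: there are no sections of self-intersection zero, the class $L|_{S_C}$ (which has square zero) is not numerically equivalent to any section, and the boundary nef class $\OO(1)$ on such a ruled surface is in general not effective at all (one has $h^0(\Sym^m (\Omega^1_X|_C))=0$ for generic such restrictions). So $\sigma$-triviality cannot ``promote'' $L|_{S_C}$ to an actual section; and even if it could, sections over the divisors $S_C$ do not assemble into a section of $mL$ on $Y$ --- which is just as well, since producing one would contradict the theorem's own hypothesis rather than prove it. What is missing is the actual source of the contradiction: once the negative part vanishes, $L$ is nef in codimension one, hence nef on a general ample hyperplane section $H\subset Y$, giving $L^2\cdot H\ge 0$; but the Grothendieck relation with $c_1(T_X)=0$ gives $L^2=-\pi^*c_2(T_X)$, so $L^2\cdot H=-24\deg(\pi|_H)<0$. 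Your proposal never uses $c_2(X)=24>0$, and without that positivity input no version of this argument can close.
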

\begin{proof}
Since stability persists if we pass to a larger algebraically closed field, we may assume $k$ is uncountable. Let
$Y=\PP(\Omega^1_X)$ and suppose for a contradiction that $L=\OO_Y(1)$ is pseudoeffective. Then there is a
Nakayama--Zariski decomposition of $L=E+N$ where $E$ is an effective $\RR$-divisor and $N$ is nef in codimension 1 (due
to \cite{Nakayama} in characteristic 0 and \cite{mustata, fl} otherwise).

From \cite[Theorem 4.1]{langer10} (or Flenner or Mehta--Ramanathan's Theorem in characteristic zero), we may pick a very
ample smooth curve $C$ on $X$ so that $\Omega^1_X|_C$ is strongly semistable (or just semistable in characteristic
zero).  Then on the ruled surface $R=\PP(\Omega^1_X|_C)$ every pseudoeffective line bundle is nef (in fact for the
projectivisation of a degree zero strongly semistable bundle on a curve, these cones agree). On the other hand, $L|_R$
is not ample, since $L^2|_R=c_1(\Omega^1_X)\cdot C=0$. Hence $L|_R$ is on the boundary of the nef cone of $R$. Write
$E=aL+\pi^*E'$.  As the Picard number of $R$ is two and $E|_R$ is also $\RR$-effective, it must be that $E'.C\geq0$. If
$E'.C>0$ then $E'$ is effective on $X$ (as $C$ can vary), so in particular $E|_R$ is big and hence ample. This contradicts
$L|_R=E|_R+N|_R$ being boundary on the nef cone though. In other words, $CE'=0$ and as $C$ can vary, $E'=0$, forcing
$E=aL$. Then $a=0$, since from the assumption $L$ has no effective multiple. It follows that $E=0$ and $L$ is nef in
codimension 1. In particular it fails to be nef on at most countably many curves $C_i$. Taking a hyperplane section $H$
of $Y$, we see then that $L|_H$ is nef. In particular, $L^2\cdot H\ge 0$. In terms of Chern classes, this means that
\[ -c_2(T_X)\ge 0, \]
which contradicts $c_2(T_X)=24$.
\end{proof}


\begin{thebibliography}{CFGK17}

\bibitem[AC81]{arbcorn}
Enrico Arbarello and Maurizio Cornalba.
\newblock Footnotes to a paper of {B}eniamino {S}egre: ``{O}n the modules of
  polygonal curves and on a complement to the {R}iemann existence theorem''
  ({I}talian) [{M}ath. {A}nn. {\bf 100} (1928), 537--551; {J}buch {\bf 54},
  685].
\newblock {\em Math. Ann.}, 256(3):341--362, 1981.

\bibitem[ACG11]{acgh}
Enrico Arbarello, Maurizio Cornalba, and Pillip~A. Griffiths.
\newblock {\em Geometry of algebraic curves. {V}olume {II}}, volume 268 of {\em
  Grundlehren der Mathematischen Wissenschaften [Fundamental Principles of
  Mathematical Sciences]}.
\newblock Springer, Heidelberg, 2011.
\newblock With a contribution by Joseph Daniel Harris.

\bibitem[BHT11]{bht}
Fedor Bogomolov, Brendan Hassett, and Yuri Tschinkel.
\newblock {Constructing rational curves on {K}3 surfaces}.
\newblock {\em Duke Mathematical Journal}, 157(3):535--550, April 2011.

\bibitem[BT00]{btdensity}
Fedor Bogomolov and Yuri Tschinkel.
\newblock {Density of rational points on elliptic {K}3 surfaces}.
\newblock {\em Asian Journal of Mathematics}, 4(2):351--368, 2000.

\bibitem[BDPP13]{bdpp}
S\'{e}bastien Boucksom, Jean-Pierre Demailly, Mihai P\u{a}un, and Thomas
  Peternell.
\newblock The pseudo-effective cone of a compact {K}\"{a}hler manifold and
  varieties of negative {K}odaira dimension.
\newblock {\em J. Algebraic Geom.}, 22(2):201--248, 2013.

\bibitem[Che99]{chen}
Xi~Chen.
\newblock Rational curves on {$K3$} surfaces.
\newblock {\em J. Algebraic Geom.}, 8(2):245--278, 1999.

\bibitem[CGL19]{regenerationinfinite}
Xi~{Chen}, Frank {Gounelas}, and Christian {Liedtke}.
\newblock {Curves on {K}3 surfaces}.
\newblock {\em Duke Math. Journal (to appear)}, page arXiv:1907.01207, 2022.

\bibitem[CFGK17]{CFGK2015}
Ciro Ciliberto, Flaminio Flamini, Concettina Galati, and Andreas~Leopold
  Knutsen.
\newblock Moduli of nodal curves on {K}3 surfaces.
\newblock {\em Adv. Math.}, 309:624--654, 2017.
\newblock Also preprint arXiv:math/1502.07378.

\bibitem[dJP00]{djp}
Theo de~Jong and Gerhard Pfister.
\newblock {\em Local analytic geometry}.
\newblock Advanced Lectures in Mathematics. Friedr. Vieweg \& Sohn,
  Braunschweig, 2000.
\newblock Basic theory and applications.

\bibitem[DS17]{dedieusernesi}
Thomas Dedieu and Edoardo Sernesi.
\newblock Equigeneric and equisingular families of curves on surfaces.
\newblock {\em Publ. Mat.}, 61(1):175--212, 2017.

\bibitem[EV92]{esnaultviehweg}
H\'{e}l\`ene Esnault and Eckart Viehweg.
\newblock {\em Lectures on vanishing theorems}, volume~20 of {\em DMV Seminar}.
\newblock Birkh\"{a}user Verlag, Basel, 1992.

\bibitem[FKPS08]{FKPS2}
Flaminio Flamini, Andreas~Leopold Knutsen, Gianluca Pacienza, and Edoardo
  Sernesi.
\newblock Nodal curves with general moduli on {$K3$} surfaces.
\newblock {\em Comm. Algebra}, 36(11):3955--3971, 2008.

\bibitem[FL17]{fl}
Mihai Fulger and Brian Lehmann.
\newblock Zariski decompositions of numerical cycle classes.
\newblock {\em J. Algebraic Geom.}, 26(1):43--106, 2017.

\bibitem[Huy14]{huybrechtsconstant}
Daniel Huybrechts.
\newblock Curves and cycles on {K}3 surfaces.
\newblock {\em Algebr. Geom.}, 1(1):69--106, 2014.
\newblock With an appendix by C. Voisin.

\bibitem[Huy16]{huybrechts}
Daniel Huybrechts.
\newblock {\em Lectures on {K}3 surfaces}, volume 158 of {\em Cambridge Studies
  in Advanced Mathematics}.
\newblock Cambridge University Press, Cambridge, 2016.

\bibitem[Kem15]{KemenyCurves}
Michael Kemeny.
\newblock The moduli of singular curves on {K}3 surfaces.
\newblock {\em J. Math. Pures Appl. (9)}, 104(5):882--920, 2015.

\bibitem[Kob80]{Kobayashi}
Shoshichi Kobayashi.
\newblock The first chern class and holomorphic symmetric tensor fields.
\newblock {\em J. Math. Soc. Japan}, 32(2):325--329, 1980.

\bibitem[Kov94]{kovacs}
S\'{a}ndor~J. Kov\'{a}cs.
\newblock The cone of curves of a {$K3$} surface.
\newblock {\em Math. Ann.}, 300(4):681--691, 1994.

\bibitem[Lan10]{langer10}
Adrian Langer.
\newblock A note on restriction theorems for semistable sheaves.
\newblock {\em Math. Res. Lett.}, 17(5):823--832, 2010.

\bibitem[Lan15]{langer15}
Adrian Langer.
\newblock {Generic positivity and foliations in positive characteristic}.
\newblock {\em Advances in Mathematics}, 277(C):1--23, June 2015.

\bibitem[Lan16]{LangerLogBMYCharP}
Andrian Langer.
\newblock The Bogomolov–Miyaoka–Yau inequality for logarithmic surfaces in
  positive characteristic.
\newblock {\em Duke Math. J.}, 165(14):2737--2769, 2016.

\bibitem[LL11]{liliedtke}
Jun Li and Christian Liedtke.
\newblock {Rational curves on {K}3 surfaces}.
\newblock {\em Inventiones mathematicae}, 188(3):713--727, October 2011.

\bibitem[Mil68]{milnor}
John Milnor.
\newblock {\em Singular points of complex hypersurfaces}.
\newblock Annals of Mathematics Studies, No. 61. Princeton University Press,
  Princeton, N.J.; University of Tokyo Press, Tokyo, 1968.

\bibitem[Miy84]{LOGMY}
Yoichi Miyaoka.
\newblock The maximal number of quotient singularities on surfaces with given
  numerical invariants.
\newblock {\em Math. Ann.}, 268(2):159--171, 1984.

\bibitem[MM83]{morimukai}
Shigefumi Mori and Shigeru Mukai.
\newblock The uniruledness of the moduli space of curves of genus {$11$}.
\newblock In {\em Algebraic geometry ({T}okyo/{K}yoto, 1982)}, volume 1016 of
  {\em Lecture Notes in Math.}, pages 334--353. Springer, Berlin, 1983.

\bibitem[Moe15]{MOEONCMN}
Torgunn~Karoline Moe.
\newblock On the number of cusps on cuspidal curves on Hirzebruch surfaces.
\newblock {\em Math. Nachr.}, 288(1):76--88, 2015.
\newblock Also preprint arXiv:math/1304.0907.

\bibitem[Mus13]{mustata}
Mircea Musta\c{t}\u{a}.
\newblock The non-nef locus in positive characteristic.
\newblock In {\em A celebration of algebraic geometry}, volume~18 of {\em Clay
  Math. Proc.}, pages 535--551. Amer. Math. Soc., Providence, RI, 2013.

\bibitem[Nak04]{Nakayama}
Noboru Nakayama.
\newblock Zariski-decomposition and abundance.
\newblock {\em MSJ Memoirs}, 14, 2004.

\bibitem[Nyg79]{nygaard}
Niels~O. Nygaard.
\newblock A {$p$}-adic proof of the nonexistence of vector fields on {$K3$}
  surfaces.
\newblock {\em Ann. of Math. (2)}, 110(3):515--528, 1979.

\bibitem[OZ95]{ONSPC}
Stepan Orevkov and Mikhail Zaidenberg.
\newblock On the number of singular points of plane curves.
\newblock In {\em Geometry of {A}lgebraic {V}arieties, Saithana 1995}, pages
  156--177. 1995.
\newblock Also preprint arXiv:alg-geom/9507005.

\bibitem[RS76]{RSVectorfields}
A.~N. Rudakov and I.~R. \v{S}afarevi\v{c}.
\newblock Inseparable morphisms of algebraic surfaces.
\newblock {\em Izv. Akad. Nauk SSSR Ser. Mat.}, 40(6):1269--1307, 1439, 1976.

\end{thebibliography}
\end{document}